\newtheorem{thmx}{Theorem}
\newtheorem{theorem}{Theorem}
\newtheorem*{theorem*}{Theorem}
\newtheorem{proposition}[theorem]{Proposition}
\newtheorem{corollary}[theorem]{Corollary}
\theoremstyle{definition}
\newtheorem{definition}[theorem]{Definition}
\newtheorem*{question*}{Question}
\newcommand{\sll}[1]{\mkern-4mu\mathbin{/\mkern-5mu/}_{\mkern-4mu{#1}}}
\newcommand{\C}{\mathbb{C}}
\newcommand{\g}{\mathfrak{g}}
\newcommand{\h}{\mathfrak{h}}
\renewcommand{\exp}{\mathrm{exp}}
\numberwithin{equation}{section}
\title[The double Gelfand--Cetlin system]{The double Gelfand--Cetlin system, invariance of polarization, and the Peter--Weyl theorem}
\author[Peter Crooks]{Peter Crooks}
\author[Jonathan Weitsman]{Jonathan Weitsman}
\address[Peter Crooks]{Department of Mathematics and Statistics\\ Utah State University \\ 3900 Old Main Hill \\ Logan, UT 84322, USA}
\email{peter.crooks@usu.edu}
\address[Jonathan Weitsman]{Department of Mathematics \\ Northeastern University \\ 360 Huntington Avenue \\ Boston, MA 02115, USA}
\email{j.weitsman@northeastern.edu}
\subjclass{53D50 (primary); 17B80, 53D20 (secondary)}
\keywords{Gelfand--Cetlin system, geometric quantization, Peter--Weyl theorem}
\begin{document}

\begin{abstract} 
The bundle map $T^*\hspace{-2pt}\operatorname{U}(n)\longrightarrow\operatorname{U}(n)$ provides a real polarization of the cotangent bundle $T^*\hspace{-2pt}\operatorname{U}(n)$, and yields the geometric quantization $Q_1(T^*\hspace{-2pt}\operatorname{U}(n)) = L^2(\operatorname{U}(n))$. We use the Gelfand--Cetlin systems of Guillemin and Sternberg to show that $T^*\hspace{-2pt}\operatorname{U}(n)$ has a different real polarization with geometric quantization $Q_2(T^*\hspace{-2pt}\operatorname{U}(n))=  \bigoplus_\alpha V_\alpha \otimes V_\alpha^*$, where the sum is over all dominant integral weights $\alpha$ of $\operatorname{U}(n)$.  The Peter--Weyl theorem, which states that these two quantizations are isomorphic, may therefore be interpreted as an instance of ``invariance of polarization" in geometric quantization.
\end{abstract}

\maketitle
\begin{scriptsize}
\setcounter{secnumdepth}{2}
\setcounter{tocdepth}{2}
\tableofcontents
\end{scriptsize}

\section{Introduction}
\subsection{Geometric quantization, real polarizations, and invariance of polarization} Let $(M,\omega)$ be an integral symplectic manifold. Suppose that $\mathcal{L}\longrightarrow M$ is a Hermitian line bundle with connection $\nabla$ of curvature $\omega$. The geometric quantization program of Kirillov--Kostant--Souriau (KKS) \cite{KostantSymplectic,Souriau,Kirillov} would assign to the \textit{prequantum system} $(M,\omega,\mathcal{L},\nabla)$ a complex vector space $Q(M)=Q(M,\omega,\mathcal{L},\nabla)$, called the \textit{geometric quantization} of $M$.\footnote{More generally, the quantization $Q(M)$ may be equipped with a Hermitian metric, or with further structure.  In this paper, we focus on the construction of $Q(M)$ as a vector space.}  One of the most basic test cases for this program involves taking $M=T^*N$ for a smooth manifold $N$, and $\omega$ to be the exterior derivative of the tautological one-form on $T^*N$. This one-form provides a connection on $\mathcal{L}=T^*N\times\mathbb{C}$ of curvature $\omega$. Considerations in quantum mechanics suggest that any physically reasonable approach to geometric quantization should satisfy $Q(T^*N)=L^2(N)$, the vector space of square-integrable functions $N\longrightarrow\mathbb{C}$.

To generalize this to symplectic manifolds $M$ other than cotangent bundles, one seeks auxillary data analogous to the cotangent bundle projection $T^*N\longrightarrow N$. One choice of such data is a {\em real polarization,} which is morally a foliation of $M$ by Lagrangian submanifolds.  The simplest case of a real polarization is a foliation arising from a Lagrangian fibration $\pi:M\longrightarrow B$. The generalization of the quantization $Q(T^*N)=L^2(N)$ is obtained by defining $Q(M)$ as the complex vector space of sections $s:M\longrightarrow\mathcal{L}$ that are covariantly constant along the fibers of $\pi$.  

The results of Sniatycki \cite{Sniatycki} show that requiring the foliation to be nonsingular and fibrating is highly restrictive;  if $M$ is compact, the only examples are abelian varieties.  
In order to move beyond abelian varieties, many examples in the literature on geometric quantization were considered where the map $\pi$ was only generically a Lagrangian fibration. The real polarization appearing in this paper will also be of this type.  A further issue arises with quantization: if the fibers of $\pi$ are compact, there are no fiberwise-constant sections; such sections instead exist only on a discrete set of fibers $\pi^{-1}(b)$, corresponding to points $b\in B_{\text{bs}} \subset B,$ where $B_{\text{bs}}\subset B$ is a discrete subset of $B,$ the set of \textit{Bohr--Sommerfeld points}. The work of Sniatycki \cite{Sniatycki} nevertheless motivates the following construction, which has been used in many contexts: the geometric quantization of a prequantum system $(M,\omega,\mathcal{L},\nabla)$ in a real polarization given by a map $\pi:M\longrightarrow B$ is defined as $$Q(M)\coloneqq\bigoplus_{b\in B_{\text{bs}}}\mathbb{C}\langle s_b\rangle,$$ where $s_b$ is any non-vanishing, covariantly constant section of the pullback of $(\mathcal{L},\nabla)$ to $\pi^{-1}(b)$ for $b\in B_{\text{bs}}$. This definition is made with the understanding that various ad hoc modifications may need to be made to account for singular fibers of $\pi$.  We will also follow the approach of the literature in this regard.

Despite the ambiguity in the constructions, this approach to quantization in a real polarization yields interesting results in many examples. In particular, it turns out to agree with K\"ahler quantization and/or quantization in a different real polarization in examples as diverse as toric manifolds \cite{Hamilton,Hamilton2}, abelian varieties \cite{Mumford,Sniatycki2}, flag varieties \cite{GuilleminSternbergGC}, and certain moduli spaces \cite{JeffreyWeitsman}. These are examples of the recurring phenomenon of \textit{invariance of polarization}.   The purpose of this paper is to present another example of this phenomenon, in the case where $M=T^*\hspace{-2pt}\operatorname{U}(n)$.

It is illustrative to consider the case of $M=T^*\hspace{-2pt}\operatorname{U}(1)=S^1\times\mathbb{R}.$ In explicit coordinates $p$ on $\mathbb{R}$ and $\vartheta$ on $S^1$, the tautological one-form and symplectic form on $T^*\hspace{-2pt}\operatorname{U}(1)$ are given by $\theta=p\mathrm{d}\vartheta$ and $\omega=\mathrm{d}\theta=\mathrm{d}p\wedge\mathrm{d}\vartheta$, respectively. Equipping $T^*\hspace{-2pt}\operatorname{U}(1)$ with the trivial Hermitian complex line bundle $\mathcal{L} = T^*\hspace{-2pt}\operatorname{U}(1) \times \C,$ with compatible connection given by the one-form $\vartheta,$ then gives a prequantum system. The natural projections $\pi_1:T^*\hspace{-2pt}\operatorname{U}(1)\longrightarrow S^1$ and $\pi_2:T^*\hspace{-2pt}\operatorname{U}(1)\longrightarrow\mathbb{R}$ are both real polarizations. We have seen that the quantization given by $\pi_1$ is $Q_1(T^*\hspace{-2pt}\operatorname{U}(1))=L^2(\operatorname{U}(1))$. In the real polarization given by $\pi_2$, covariantly constant sections exist only on the fibers of $\pi_2$ lying over integer points $k\in\mathbb{Z}\subset\mathbb{R}$; a non-vanishing, covariantly constant section on $\pi^{-1}(k)$ is $s_k:S^1\longrightarrow\mathbb{C}$, $\theta\mapsto\mathrm{e}^{ik\theta}$, for $k\in\mathbb{Z}$. The geometric quantization in the real polarization given by $\pi_2$ is therefore given by $$Q_2(T^*\hspace{-2pt}\operatorname{U}(1))=\bigoplus_{k\in\mathbb{Z}}\mathbb{C}\langle \mathrm{e}^{ik\theta}\rangle.$$
\begin{center}
\includegraphics[width=10cm,height=5cm,angle=0]{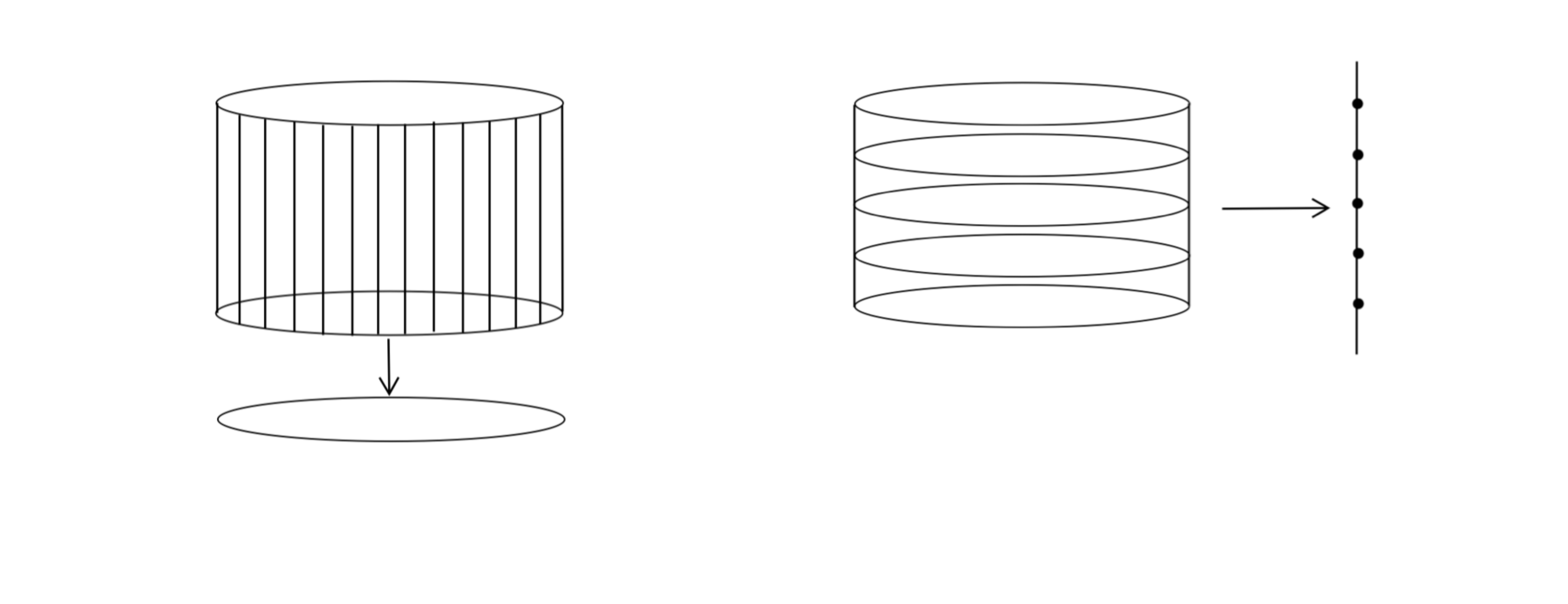}
\vspace{-25pt}
\captionof{figure}{Two real polarizations of $T^*\hspace{-2pt}\operatorname{U}(1)$} 
\end{center}
\vspace{10pt}
\noindent Fourier analysis then implies that $Q_1(T^*\hspace{-2pt}\operatorname{U}(1))$ is the completion of $Q_2(T^*\hspace{-2pt}\operatorname{U}(1))$ (in the $L^2$ metric on $Q_1(T^*\hspace{-2pt}\operatorname{U}(1))= L^2(\operatorname{U}(1))$), and so gives an example of invariance of polarization.

\subsection{Real polarizations of $T^*\hspace{-2pt}\operatorname{U}(n)$} The purpose of this paper is to show that a similar phenomenon occurs if $\operatorname{U}(1)$ is replaced with $\operatorname{U}(n)$. A first step is to recall that the bundle projection $T^*\hspace{-2pt}\operatorname{U}(n)\longrightarrow\operatorname{U}(n)$ is a real polarization with quantization $Q_1(T^*\hspace{-2pt}\operatorname{U}(n))=L^2(\operatorname{U}(n))$. To obtain a second real polarization of $T^*\hspace{-2pt}\operatorname{U}(n)$ and quantization $Q_2(T^*\hspace{-2pt}\operatorname{U}(n))$, we set $\text{b}\coloneqq\frac{n(n+1)}{2}$ and use the Gelfand--Cetlin system $$\lambda:\mathfrak{u}(n)^*\longrightarrow\mathbb{R}^{\text{b}}$$ of Guillemin--Sternberg \cite{GuilleminSternbergGC}: one has $$\lambda(\xi)=(\lambda_{01}(\xi),\ldots,\lambda_{0n}(\xi),\lambda_{11}(\xi),\ldots,\lambda_{1(n-1)}(\xi),\ldots,\lambda_{(n-1)1}(\xi))$$ for all skew-Hermitian $n\times n$ matrices $\xi$, where $\lambda_{j1}(\xi)\geq\cdots\geq\lambda_{j(n-j)}(\xi)$ are the eigenvalues of the bottom-right $(n-j)\times(n-j)$ corner of $\xi$ for all $j\in\{0,\ldots,n-1\}$. Another ingredient is the moment map
$$\phi:T^*\hspace{-2pt}\operatorname{U}(n)\longrightarrow\mathfrak{u}(n)^*\times\mathfrak{u}(n)^*$$ for the Hamiltonian action of $\operatorname{U}(n)\times\operatorname{U}(n)$ on $T^*\hspace{-2pt}\operatorname{U}(n)$. The composite map
$$\varphi\coloneqq(\lambda,\lambda)\circ\phi:T^*\hspace{-2pt}\operatorname{U}(n)\longrightarrow\mathbb{R}^{\text{b}}\times\mathbb{R}^{\text{b}}=\mathbb{R}^{2\text{b}}$$ then restricts to a moment map for a Hamiltonian action of the torus $\mathbb{T}\times\mathbb{T}$ on an open dense subset $(T^*\hspace{-2pt}\operatorname{U}(n))_{\text{s-reg}}\subset T^*\hspace{-2pt}\operatorname{U}(n)$, where $\mathbb{T}\coloneqq\operatorname{U}(1)^{\text{b}}$. Taking a quotient of $\mathbb{T}\times\mathbb{T}$ by the kernel of this action yields a torus $\mathcal{T}$. The action of $\mathcal{T}$ on  $(T^*\hspace{-2pt}\operatorname{U}(n))_{\text{s-reg}}$ turns out to be free, and the map $\varphi$ takes values in $$\mathcal{B}\coloneqq\mathrm{Lie}(\mathcal{T})^*\subset \mathbb{R}^{2\text{b}}.$$ One finds the $\mathcal{T}$-moment map
\begin{equation}\label{Equation: Real polarization}\varphi\big\vert_{(T^*\hspace{-2pt}\operatorname{U}(n))_{\text{s-reg}}}:(T^*\hspace{-2pt}\operatorname{U}(n))_{\text{s-reg}}\longrightarrow\mathcal{B}\end{equation} to be a real polarization whose non-empty fibers are precisely the orbits of $\mathcal{T}$ in $T^*\hspace{-2pt}\operatorname{U}(n))_{\text{s-reg}}$.

We next describe the set of Bohr--Sommerfeld points (a.k.a. Bohr--Sommerfeld set) of the real polarization constructed above. To this end, suppose that $\alpha=(\alpha_1,\ldots,\alpha_{n})$ is a non-increasing sequence of real numbers. Write $\mathcal{O}_{\alpha}\subset\mathfrak{u}(n)^*$ for the coadjoint orbit of $\operatorname{U}(n)$ through $\mathrm{diag}(\alpha_{1},\ldots,\alpha_n)\in\mathfrak{u}(n)^*$. The compact subset $\mathrm{GC}_\alpha\subset\mathbb{R}^{\frac{n(n-1)}{2}}$ satisfying $\lambda(\mathcal{O}_{\alpha})=\{\alpha\}\times\mathrm{GC}_{\alpha}$ is the \textit{Gelfand--Cetlin polytope} of $\mathcal{O}_{\alpha}$, and features in the following result.    

\begin{thmx}\label{Theorem: First}
The Bohr--Sommerfeld set of \eqref{Equation: Real polarization} is $\varphi((T^*\hspace{-2pt}\operatorname{U}(n))_{\emph{s-reg}})\cap\mathbb{Z}^{2\emph{b}}$. Furthermore, a point $(\alpha_1,\ldots,\alpha_{2\emph{b}})\in\mathbb{R}^{2\emph{b}}$ belongs to this set if and only if it satisfies the following conditions:
\begin{itemize}
\item[\textup{(i)}] $(\alpha_1,\ldots,\alpha_{2\emph{b}})\in\mathbb{Z}^{2\emph{b}}$;
\item[\textup{(ii)}] $\alpha_1>\cdots>\alpha_{n}$;
\item[\textup{(iii)}] $\alpha_{\emph{b}+1+j}=-\alpha_{n-j}$ for all $j\in\{0,\ldots,n-1\}$;
\item[\textup{(iv)}] $(\alpha_{n+1},\ldots,\alpha_{\emph{b}})\in\mathrm{interior}(\mathrm{GC}_{\alpha})$, where $\alpha\coloneqq(\alpha_1,\ldots,\alpha_{n})$;
\item[\textup{(v)}] $(\alpha_{\emph{b}+n+1},\ldots,\alpha_{\emph{2b}})\in\mathrm{interior}(\mathrm{GC}_{\alpha^*})$, where $\alpha^*\coloneqq(-\alpha_{n},\ldots,-\alpha_1)$.
\end{itemize}
\end{thmx}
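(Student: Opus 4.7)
The plan is to split the argument into two components: first, reducing the Bohr--Sommerfeld condition to the integrality of $\varphi(x)$ in $\mathbb{R}^{2\text{b}}$; second, explicitly describing the image $\varphi((T^*\hspace{-2pt}\operatorname{U}(n))_{\text{s-reg}})$ via (ii)--(v). The first step uses the standard theory of Bohr--Sommerfeld sets for free Hamiltonian torus actions with Lagrangian orbits; the second combines the explicit form of the $\operatorname{U}(n)\times\operatorname{U}(n)$-moment map on $T^*\hspace{-2pt}\operatorname{U}(n)$ with the Guillemin--Sternberg theory of Gelfand--Cetlin systems on individual coadjoint orbits.

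For the first part, the key point is that since $\mathcal{T}$ acts freely on $(T^*\hspace{-2pt}\operatorname{U}(n))_{\text{s-reg}}$ with Lagrangian orbits coinciding with the non-empty fibers of \eqref{Equation: Real polarization}, the Bohr--Sommerfeld condition on a fiber $\varphi^{-1}(b)$ is the standard one: $b$ must lie in the weight lattice $\Lambda^{*}\subset\mathcal{B}=\mathrm{Lie}(\mathcal{T})^{*}$ of $\mathcal{T}$. Since $\mathcal{T}$ arises as a quotient of $\mathbb{T}\times\mathbb{T}=\operatorname{U}(1)^{2\text{b}}$ by a subtorus, its characters correspond bijectively to characters of $\mathbb{T}\times\mathbb{T}$ trivial on that subtorus, so $\Lambda^{*}=\mathbb{Z}^{2\text{b}}\cap\mathcal{B}$. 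This identifies the Bohr--Sommerfeld set with $\varphi((T^*\hspace{-2pt}\operatorname{U}(n))_{\text{s-reg}})\cap\mathbb{Z}^{2\text{b}}$ and simultaneously yields condition (i).

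For the second part, work in the left-trivialization $T^*\hspace{-2pt}\operatorname{U}(n)\cong\operatorname{U}(n)\times\mathfrak{u}(n)^{*}$, under which the $\operatorname{U}(n)\times\operatorname{U}(n)$-moment map takes the form $\phi(g,\xi)=(\xi,-\mathrm{Ad}^{*}(g^{-1})\xi)$. Thus $\phi_{2}(g,\xi)$ lies on the coadjoint orbit of $-\xi$, and if the non-increasing eigenvalues of $\xi$ are $\alpha_{1}\geq\cdots\geq\alpha_{n}$ then those of $-\mathrm{Ad}^{*}(g^{-1})\xi$ are $-\alpha_{n}\geq\cdots\geq-\alpha_{1}$, immediately yielding condition (iii) by reading off the top row of $\lambda\circ\phi_{2}$. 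Strong regularity of $(g,\xi)$ is, by construction, the condition for $\mathcal{T}$ to act freely and for $\varphi$ to define a real polarization; the Guillemin--Sternberg description of the Gelfand--Cetlin torus action shows this coincides with (a) regularity of both $\phi_{1}$ and $\phi_{2}$ (giving the strict inequalities in (ii), with (iii) automatically forcing distinct eigenvalues on the other side), and (b) interiority of the Gelfand--Cetlin coordinates of $\phi_{1}$ and $\phi_{2}$ in $\mathrm{GC}_{\alpha}$ and $\mathrm{GC}_{\alpha^{*}}$ respectively (giving (iv) and (v)). Conversely, any tuple satisfying (ii)--(v) is in the image: choose $\xi\in\mathcal{O}_{\alpha}$ with Gelfand--Cetlin coordinates $(\alpha_{n+1},\ldots,\alpha_{\text{b}})$ using surjectivity of $\lambda|_{\mathcal{O}_{\alpha}}$ onto $\mathrm{GC}_{\alpha}$, then select $g\in\operatorname{U}(n)$ via transitivity on $\mathcal{O}_{\alpha^{*}}$ so that $\lambda(-\mathrm{Ad}^{*}(g^{-1})\xi)$ realizes the prescribed coordinates $(\alpha_{\text{b}+n+1},\ldots,\alpha_{2\text{b}})$.

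The main obstacle is the precise identification of the strongly regular locus as the simultaneous regular/interior locus on both factors. While each single-factor statement is classical --- regularity of the coadjoint element together with interiority of its Gelfand--Cetlin image is exactly the free locus of the Guillemin--Sternberg torus action --- combining them on $T^*\hspace{-2pt}\operatorname{U}(n)$ requires checking that freeness of the quotient $\mathcal{T}$-action is equivalent to this conjunction, rather than to some mixed condition involving the kernel by which $\mathbb{T}\times\mathbb{T}$ is divided to form $\mathcal{T}$, and that the image condition factors cleanly into (iv) and (v).
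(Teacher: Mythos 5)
Your two-part strategy is the same as the paper's: Theorem~A is obtained there by combining the statement that the Bohr--Sommerfeld set of the restricted system is $\varphi((T^*\hspace{-2pt}\operatorname{U}(n))_{\text{s-reg}})\cap\mathbb{Z}^{2\text{b}}$ (Theorem \ref{Theorem: BS}) with an explicit computation of the image $\varphi((T^*\hspace{-2pt}\operatorname{U}(n))_{\text{s-reg}})$ (Proposition \ref{Proposition: Open dense image}). Your treatment of the image --- reading off (iii) from the fact that $\phi_1(\beta)$ and $-\phi_2(\beta)$ lie on the same coadjoint orbit, deducing (ii), (iv), (v) from the description of $\g^*_{\text{s-reg}}$, and proving surjectivity by first choosing $\xi$ with prescribed Gelfand--Cetlin coordinates and then $g$ by transitivity --- is essentially the paper's argument, up to a harmless difference in the choice of trivialization of $T^*G$. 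The ``main obstacle'' you flag at the end is not actually an issue in the paper's setup: $(T^*G)_{\text{s-reg}}$ is \emph{defined} as $\phi_1^{-1}(\g^*_{\text{s-reg}})\cap\phi_2^{-1}(\g^*_{\text{s-reg}})$, so the image condition factors by construction; freeness of the $\mathcal{T}$-action is proved separately (Proposition \ref{Proposition: Trivial}) and is not what characterizes this locus.

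The genuine gap is in your first part. The claim that the Bohr--Sommerfeld set is ``$\varphi(\cdot)$ intersected with the weight lattice of $\mathcal{T}$'' is not an off-the-shelf fact here, for two reasons. First, the $\mathbb{T}\times\mathbb{T}$-action exists only fiberwise, via the eigenvalue-dependent isomorphisms $(\tau_j)_{\sigma_j(\xi)}:\mathbb{T}_j\to(G_j)_{\sigma_j(\xi)}$ of Proposition \ref{Proposition: Integration}; it is not the restriction of a global $\theta$-preserving torus action, so the holonomy of $\nabla$ around a $\mathcal{T}$-orbit has to be computed by identifying each $\mathbb{T}_{jk}$-orbit with an orbit of the stabilizer subgroup $(G_j)_{\sigma_j(\xi)}\subset G\times G$, whose cotangent-lifted action does preserve $\theta$ and satisfies \eqref{Equation: Moment map condition}. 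Second, and more importantly, that computation produces the pairing $\xi\big(\mathrm{d}_\xi\nu_{jk}\big)$ rather than the value $\nu_{jk}(\xi)$: a moment map is only determined up to additive constants, and a constant shift would move the Bohr--Sommerfeld set off the integer lattice entirely, so the location of the set at $\mathbb{Z}^{2\text{b}}\cap\mathcal{B}$ is not automatic from ``standard theory.'' The identity $\xi(\mathrm{d}_\xi\nu_{jk})=\nu_{jk}(\xi)$ --- Euler's identity for the degree-one homogeneous eigenvalue functions, which is the content of Proposition \ref{Proposition: Integral} --- is exactly the normalization your argument needs and is the computational heart of the paper's proof of Theorem \ref{Theorem: BS}; the paper then assembles the orbit-by-orbit integrality via functions $f_{jk}$ with $\iota_{\Lambda_{jk}}^*\theta=\tfrac{1}{2\pi i}f_{jk}^{-1}\mathrm{d}f_{jk}$ and reduces to a single test point using \cite[Theorem 2.4]{GuilleminSternbergGC}. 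Your proposal does not supply this step, and without it condition (i) is unjustified.
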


\subsection{The quantization}\label{Subsection: The quantization}
Suppose that $\alpha=(\alpha_1,\ldots,\alpha_{n})\in\mathbb{R}^n$ is non-increasing. Guillemin and Sternberg \cite{GuilleminSternbergGC} consider a continuous map $\lambda_{\alpha}:\mathcal{O}_{\alpha}\longrightarrow\mathbb{R}^{\frac{1}{2}\dim\mathcal{O}_{\alpha}}$ that restricts to a real polarization on an open dense subset $(\mathcal{O}_{\alpha})_{\text{s-reg}}\subset\mathcal{O}_{\alpha}$. They also note that $\mathcal{O}_{\alpha}$ carries a prequantum system in the special case of $\alpha\in\mathbb{Z}^n$. This real polarization then gives rise to a well-defined Bohr--Sommerfeld set for $\lambda_{\alpha}\big\vert_{(\mathcal{O}_{\alpha})_{\text{s-reg}}}$; it consists of the integral points in $\lambda_{\alpha}((\mathcal{O}_{\alpha})_{\text{s-reg}})$. As explained in Sections 5 and 6 of \cite{GuilleminSternbergGC}, it is natural to redefine this Bohr--Sommerfeld set as the collection of integral points in $\lambda_{\alpha}(\mathcal{O}_{\alpha})=\mathrm{GC}_{\alpha}$, and thereby think of oneself as quantizing all of $\mathcal{O}_{\alpha}$. One appealing consequence of this modified definition is that $\dim Q(\mathcal{O}_{\alpha})=\dim V_{\alpha}$, where $V_{\alpha}$ is the irreducible complex $\operatorname{U}(n)$-module of highest weight $\alpha$ \cite[Theorem 6.1]{GuilleminSternbergGC}.      

We adopt an analogous approach for purposes of quantizing $T^*\hspace{-2pt}\operatorname{U}(n)$. This amounts to replacing the Bohr--Sommerfeld set $\varphi((T^*\hspace{-2pt}\operatorname{U}(n))_{\text{s-reg}})\cap\mathbb{Z}^{2\text{b}}$ in Theorem \ref{Theorem: First} with $\varphi(T^*\hspace{-2pt}\operatorname{U}(n))\cap\mathbb{Z}^{2\text{b}}$. A point $(\alpha_1,\ldots,\alpha_{2\text{b}})\in\mathbb{R}^{2\text{b}}$ then belongs to this new Bohr--Sommerfeld set if and only if it satisfies the following conditions: 
\begin{itemize}
\item[\textup{(i)}] $(\alpha_1,\ldots,\alpha_{2\text{b}})\in\mathbb{Z}^{2\text{b}}$;
\item[\textup{(ii)}] $\alpha_1\geq\cdots\geq\alpha_{n}$;
\item[\textup{(iii)}] $\alpha_{\text{b}+1+j}=-\alpha_{n-j}$ for all $j\in\{0,\ldots,n-1\}$;
\item[\textup{(iv)}] $(\alpha_{n+1},\ldots,\alpha_{\text{b}})\in\mathrm{GC}_{\alpha}$, where $\alpha\coloneqq(\alpha_1,\ldots,\alpha_{n})$;
\item[\textup{(v)}] $(\alpha_{\text{b}+n+1},\ldots,\alpha_{\text{2b}})\in\mathrm{GC}_{\alpha^*}$, where $\alpha^*\coloneqq(-\alpha_{n},\ldots,-\alpha_1)$.
\end{itemize} In this way, a Bohr--Sommerfeld point is the data of a dominant integral weight $\alpha$ of $\operatorname{U}(n)$, an integral point in $\mathrm{GC}_{\alpha}$, and an integral point in $\mathrm{GC}_{\alpha^*}$. We also recognize that $\alpha^*$ is the highest weight of $V_{\alpha}^*$ for each dominant integral weight $\alpha$. These considerations justify the following result.

\begin{thmx} The geometric quantization of $T^*\hspace{-2pt}\operatorname{U}(n)$ associated with the real polarization \eqref{Equation: Real polarization} is
$$Q_2(T^*\hspace{-2pt}\operatorname{U}(n))=\bigoplus_{\alpha}V_{\alpha}\otimes V_{\alpha}^*,$$ where the sum is taken over all dominant integral weights $\alpha$ of $\operatorname{U}(n)$.
\end{thmx}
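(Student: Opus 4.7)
The plan is to enumerate the modified Bohr--Sommerfeld set of the polarization \eqref{Equation: Real polarization} using the explicit description already given in Section \ref{Subsection: The quantization}, and then to identify the resulting basis vectors with tensor products of Gelfand--Cetlin basis vectors of irreducible $\operatorname{U}(n)$-modules by invoking \cite[Theorem 6.1]{GuilleminSternbergGC}. By the Sniatycki-type definition of geometric quantization in a real polarization,
$$Q_{2}(T^{*}\hspace{-2pt}\operatorname{U}(n))=\bigoplus_{b\in\varphi(T^{*}\hspace{-2pt}\operatorname{U}(n))\cap\mathbb{Z}^{2\mathrm{b}}}\mathbb{C}\langle s_{b}\rangle,$$
so it suffices to exhibit a canonical bijection between this index set and a basis of $\bigoplus_{\alpha}V_{\alpha}\otimes V_{\alpha}^{*}$.

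I would first use the conditions (i)--(v) recalled in Section \ref{Subsection: The quantization} to parametrize every Bohr--Sommerfeld point as a triple $(\alpha,p,q)$, in which $\alpha=(\alpha_{1},\ldots,\alpha_{n})$ is a dominant integral weight of $\operatorname{U}(n)$ (coming from (i)--(iii)), $p\in \mathrm{GC}_{\alpha}\cap\mathbb{Z}^{\mathrm{b}-n}$ (from (iv)), and $q\in \mathrm{GC}_{\alpha^{*}}\cap\mathbb{Z}^{\mathrm{b}-n}$ (from (v)). Condition (iii) imposes exactly the relation $\alpha^{*}=(-\alpha_{n},\ldots,-\alpha_{1})$, which is the highest weight of $V_{\alpha}^{*}$, so the second coordinate block is unambiguously governed by the same polytope combinatorics applied to $V_{\alpha}^{*}$.

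I would then invoke the Guillemin--Sternberg multiplicity theorem \cite[Theorem 6.1]{GuilleminSternbergGC}: for any dominant integral weight $\alpha$, the integral points of $\mathrm{GC}_{\alpha}$ are in canonical bijection with a Gelfand--Cetlin basis of $V_{\alpha}$. Applying this to both $\alpha$ and $\alpha^{*}$ identifies the set of pairs $(p,q)$ lying over a fixed $\alpha$ with a basis of $V_{\alpha}\otimes V_{\alpha}^{*}$. Summing over dominant integral weights $\alpha$ assembles the fibre-by-fibre identifications into the stated isomorphism.

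The main obstacle is justifying the replacement of the genuine Bohr--Sommerfeld set of Theorem \ref{Theorem: First} by the enlarged set $\varphi(T^{*}\hspace{-2pt}\operatorname{U}(n))\cap\mathbb{Z}^{2\mathrm{b}}$, which augments it by the integral points lying on the boundary strata of the two Gelfand--Cetlin polytopes. These extra points correspond precisely to the fibres of $\varphi$ where the polarization degenerates, and so fall outside the regime of the Sniatycki formalism. I would handle this in exactly the \emph{ad hoc} manner of Sections 5 and 6 of \cite{GuilleminSternbergGC}, adjoining one basis vector per integral point of the closed polytope and adopting this as the appropriate definition; this is the same modification that, in the coadjoint-orbit case, is needed to recover $\dim Q(\mathcal{O}_{\alpha})=\dim V_{\alpha}$, and is what makes the dimension count above produce $V_{\alpha}\otimes V_{\alpha}^{*}$ rather than a strictly smaller space.
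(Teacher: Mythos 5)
Your proposal is correct and follows essentially the same route as the paper: both replace the genuine Bohr--Sommerfeld set with $\varphi(T^*\hspace{-2pt}\operatorname{U}(n))\cap\mathbb{Z}^{2\mathrm{b}}$ in the ad hoc manner of Guillemin--Sternberg, parametrize its points as a dominant integral weight $\alpha$ together with integral points of $\mathrm{GC}_{\alpha}$ and $\mathrm{GC}_{\alpha^*}$ (using Proposition \ref{Proposition: Full image}), and then apply \cite[Theorem 6.1]{GuilleminSternbergGC} together with $V_{\alpha^*}=V_{\alpha}^*$ to identify the resulting direct sum with $\bigoplus_{\alpha}V_{\alpha}\otimes V_{\alpha}^*$. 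The only cosmetic difference is that you phrase the final step as a bijection with Gelfand--Cetlin basis vectors, whereas the paper merely counts dimensions.
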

  
The Peter--Weyl theorem allows one to identify (the completion of) $Q_2(T^*\hspace{-2pt}\operatorname{U}(n))$ with $Q_1(T^*\hspace{-2pt}\operatorname{U}(n))=L^2(\operatorname{U}(n))$, and so provides an instance of invariance of polarization.

\subsection{Organization} We devote Section \ref{Section: Double GC} to the definition, underlying theory, and basic properties of the double Gelfand--Cetlin system on $T^*\hspace{-2pt}\operatorname{U}(n)$. This section begins with Section \ref{Subsection: GC theory}, a brief overview of the pertinent parts of Gelfand--Cetlin theory in arbitrary Lie type. A subsequent specialization to the case $G=\operatorname{U}(n)$ is given in Section \ref{Subsection: Special case}. In Section \ref{Subsection: Hamiltonian geometry}, we briefly review the Hamiltonian geometry of the cotangent bundle of a compact connected Lie group. The double Gelfand--Cetlin system on $T^*\hspace{-2pt}\operatorname{U}(n)$ is then defined and studied in Section \ref{Subsection: Double GC}. In Section \ref{Subsection: Images}, we compute the image of the moment map underlying this system.

Section \ref{Section: Quantization} is entirely concerned with quantizing $T^*\hspace{-2pt}\operatorname{U}(n)$ in the real polarization coming from the double Gelfand--Cetlin system. In Section \ref{Subsection: BS}, we compute the Bohr--Sommerfeld set arising from this polarization. This leads to Section \ref{Subsection: Peter--Weyl}, in which we use the Bohr--Sommerfeld set and Peter--Weyl theorem to present an instance of invariance of polarization.

\subsection*{Acknowledgements} P.C. was supported by a startup grant at Utah State University. J.W. was supported by Simons Collaboration Grant \# 579801.

\section{The double Gelfand--Cetlin system}\label{Section: Double GC}
\subsection{Some Gelfand--Cetlin theory}\label{Subsection: GC theory} Let $G$ be a compact connected Lie group with rank $\ell$, Lie algebra $\g$, exponential map $\exp:\g\longrightarrow G$, and fixed $G$-invariant inner product $\g\otimes_{\mathbb{R}}\g\longrightarrow\mathbb{R}$. We will have occasion to consider the quantity $$\text{b}\coloneqq\frac{1}{2}(\dim\g+\ell).$$ Let us also suppose that $G=G_0\supset G_1\supset\cdots\supset G_m$ is a descending filtration of $G$ by connected closed subgroups with respective Lie algebras $\g=\g_0\supset\g_1\supset\cdots\supset\g_m$ and ranks $\ell=\ell_0\geq\ell_1\geq\cdots\geq\ell_m$, and that the condition $\text{b}=\sum_{j=0}^m\ell_j$ is satisfied. Choose maximal tori $T_j\subset G_j$ for $j\in\{0,\ldots,m\}$. Consider the Lie algebra $\mathfrak{t}_j\subset\g_j$ for each $j\in\{0,\ldots,m\}$, and choose a fundamental Weyl chamber $(\mathfrak{t}_j)_{+}\subset\mathfrak{t}_j$. Observe that our $G$-invariant inner product induces an isomorphism $\mathfrak{g}_j\cong\mathfrak{g}_j^*$ of $G_j$-representations. The subsets $\mathfrak{t}_j$ and $(\mathfrak{t}_j)_+$ of $\g_j$ are thereby identified with subsets $\mathfrak{t}_j^*$ and $(\mathfrak{t}_j^*)_{+}$ of $\mathfrak{g}_j^*$, respectively. These considerations give rise to a \text{sweeping map} $\pi_j:\mathfrak{g}_j^*\longrightarrow(\mathfrak{t}_j^*)_{+}$; it is defined by the property that $(G_j\cdot\xi)\cap(\mathfrak{t}_j^*)_{+}=\{\pi_j(\xi)\}$ for all $\xi\in\mathfrak{g}_j^*$, where $G_j\cdot\xi\subset\g_j^*$ denotes the coadjoint orbit of $G_j$ through $\xi$. 

Fix an index $j\in\{0,\ldots,m\}$. Given any maximal torus $H\subset G_j$ with Lie algebra $\h\subset\g_j$, consider the lattice $$\Lambda^{\vee}(H)\coloneqq\frac{1}{2\pi}\mathrm{ker}\left(\exp\big\vert_{\h}:\h\longrightarrow H\right)\subset\h.$$ The weight lattice of $H$ is then given by
$$\Lambda(H)\coloneqq\{\alpha\in\h^*:\alpha(x)\in\mathbb{Z}\text{ for all }x\in\Lambda^{\vee}(H)\}\subset\h^*.$$
On the other hand, let us choose a $\mathbb{Z}$-basis $\{\phi_{j1},\ldots,\phi_{j\ell_j}\}$ of the lattice $\Lambda^{\vee}(T_j)$. The functions 
$$\nu_{jk}\coloneqq\phi_{jk}\circ\pi_j:\g_j^*\longrightarrow\mathbb{R},\quad k\in\{1,\ldots,\ell_j\}$$ are then observed to be $G_j$-invariant and continuous. These functions are smooth on $$(\mathfrak{g}_j^*)_{\text{reg}}\coloneqq\{\xi\in\g_j^*:\dim(\g_{j})_{\xi}=\ell_j\},$$ where $(\g_j)_{\xi}\subset\g_j$ denotes the $\g_j$-centralizer of $\xi\in\g_j^*$. Let us also write $(G_j)_{\xi}\subset G_j$ for the $G_j$-stabilizer of $\xi\in\g_j^*$ under the coadjoint action. The next result then plays an important role in what follows. 

\begin{proposition}\label{Proposition: Integral}
Suppose that $j\in\{0,\ldots,m\}$ and $\xi\in(\g_j^*)_{\emph{reg}}$. The restriction $\xi\big\vert_{(\g_j)_{\xi}}\in(\g_j)_{\xi}^*$ is in the weight lattice $\Lambda((G_j)_{\xi})$ if and only if $\nu_{jk}(\xi)\in\mathbb{Z}$ for all $k\in\{1,\ldots,\ell_j\}$.
\end{proposition}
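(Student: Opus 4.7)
The proposition is a statement about regular coadjoint elements and their stabilizers, so the natural approach is to reduce everything to the maximal torus $T_j$ by means of a conjugation, and then read off the integrality condition from the chosen $\mathbb{Z}$-basis of $\Lambda^\vee(T_j)$.

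\textbf{Step 1: Reduce to the maximal torus via conjugation.} Since $\xi \in (\mathfrak{g}_j^*)_{\mathrm{reg}}$, the stabilizer $(G_j)_\xi$ is a maximal torus of $G_j$ with Lie algebra $(\mathfrak{g}_j)_\xi$. By definition of the sweeping map, there exists $g \in G_j$ with $\mathrm{Ad}^*(g)\xi = \pi_j(\xi)$. Conjugation by $g$ carries $(G_j)_\xi$ isomorphically onto the stabilizer of $\pi_j(\xi)$, which, because $\pi_j(\xi) \in (\mathfrak{t}_j^*)_+$ and $\xi$ is regular, is exactly $T_j$. So $\mathrm{Ad}(g)$ restricts to a Lie-algebra isomorphism $(\mathfrak{g}_j)_\xi \xrightarrow{\sim} \mathfrak{t}_j$.

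\textbf{Step 2: Transfer lattices and the restriction $\xi|_{(\mathfrak{g}_j)_\xi}$.} Because the exponential map is natural with respect to Lie-group conjugation, $\mathrm{Ad}(g)$ sends $\Lambda^\vee((G_j)_\xi)$ bijectively onto $\Lambda^\vee(T_j)$, and the dual map sends $\Lambda(T_j)$ onto $\Lambda((G_j)_\xi)$. For any $y \in (\mathfrak{g}_j)_\xi$,
\[
\xi\big|_{(\mathfrak{g}_j)_\xi}(y) \;=\; \xi(y) \;=\; (\mathrm{Ad}^*(g)\xi)(\mathrm{Ad}(g)y) \;=\; \pi_j(\xi)(\mathrm{Ad}(g)y).
\]
Thus $\xi|_{(\mathfrak{g}_j)_\xi}$ corresponds, under the lattice isomorphism of Step 1, to $\pi_j(\xi) \in \mathfrak{t}_j^*$. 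In particular $\xi|_{(\mathfrak{g}_j)_\xi} \in \Lambda((G_j)_\xi)$ if and only if $\pi_j(\xi) \in \Lambda(T_j)$.

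\textbf{Step 3: Read integrality from the $\mathbb{Z}$-basis.} By the very definition of $\Lambda(T_j)$, the element $\pi_j(\xi)$ lies in $\Lambda(T_j)$ if and only if it takes integer values on the $\mathbb{Z}$-basis $\{\phi_{j1},\ldots,\phi_{j\ell_j}\}$ of $\Lambda^\vee(T_j)$. Under the identification of $\phi_{jk} \in \mathfrak{t}_j$ with the corresponding evaluation functional on $\mathfrak{t}_j^*$, this amounts to $\nu_{jk}(\xi) = \phi_{jk}(\pi_j(\xi)) \in \mathbb{Z}$ for every $k \in \{1,\ldots,\ell_j\}$. Combining with Step 2 yields the proposition.

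\textbf{Main obstacle.} None of the steps involve a hard calculation; the only real point requiring care is the naturality in Steps 1--2, namely that $\mathrm{Ad}(g)$ is compatible both with the kernel of $\exp$ (hence with $\Lambda^\vee$) and with the coadjoint action (so that $\xi|_{(\mathfrak{g}_j)_\xi}$ is genuinely transported to $\pi_j(\xi)|_{\mathfrak{t}_j} = \pi_j(\xi)$). One should also note that the conclusion is independent of the choice of $g$, since any two such choices differ by an element of the Weyl group of $T_j$, which preserves $\Lambda^\vee(T_j)$ and $\Lambda(T_j)$ setwise and leaves $\pi_j(\xi)$ fixed.
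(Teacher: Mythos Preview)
Your proof is correct and takes a different route from the paper's. The paper's argument invokes \cite[Proposition~5]{CrooksWeitsman}, which asserts that $\{\mathrm{d}_\xi\nu_{j1},\ldots,\mathrm{d}_\xi\nu_{j\ell_j}\}$ is a $\mathbb{Z}$-basis of $\Lambda^\vee((G_j)_\xi)\subset(\g_j)_\xi$; the conclusion then follows from the Euler-type identity $\xi(\mathrm{d}_\xi\nu_{jk})=\nu_{jk}(\xi)$, which holds because $\nu_{jk}$ is homogeneous of degree one. You instead conjugate $(G_j)_\xi$ back to the fixed maximal torus $T_j$ and read off integrality from the chosen basis $\{\phi_{jk}\}$ of $\Lambda^\vee(T_j)$. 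Your argument is more elementary and entirely self-contained, requiring no external reference. The paper's approach, by contrast, explicitly identifies the differentials $\mathrm{d}_\xi\nu_{jk}$ as lattice generators of the stabilizer, and this identification is exactly what is needed in the very next result (Proposition~\ref{Proposition: Integration}), where those same differentials are integrated to produce the isomorphism $(\tau_j)_\xi:\mathbb{T}_j\overset{\cong}\longrightarrow(G_j)_\xi$. So the two approaches trade self-containment for reusability.
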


\begin{proof}
An application of \cite[Proposition 5]{CrooksWeitsman} reveals that $\{\mathrm{d}_{\xi}\nu_{j1},\ldots,\mathrm{d}_{\xi}\nu_{j\ell_j}\}$ is a $\mathbb{Z}$-basis of $\Lambda^{\vee}((G_j)_{\xi})\subset(\g_j)_{\xi}$. It follows that $\xi\big\vert_{(\g_j)_{\xi}}$ is in $\Lambda((G_j)_{\xi})$ if and only if $\xi(\mathrm{d}_{\xi}\nu_{jk})\in\mathbb{Z}$ for all $k\in\{1,\ldots,\ell_j\}$. On the other hand, it is clear that $\xi(\mathrm{d}_{\xi}\nu_{jk})=\nu_{jk}(\xi)$ for all $k\in\{1,\ldots,\ell_j\}$.
\end{proof}

This gives context for the following slightly rephrased version of \cite[Proposition 7]{CrooksWeitsman}, in which we write
$\mathbb{T}_j\coloneqq\operatorname{U}(1)^{\ell_j}$ for $j\in\{0,\ldots,m\}$.

\begin{proposition}\label{Proposition: Integration}
If $j\in\{0,\ldots,m\}$ and $\xi\in(\g_j^*)_{\emph{reg}}$, then the linear map $$\mathbb{R}^{\ell_j}\longrightarrow(\g_j)_{\xi},\quad (x_1,\ldots,x_{\ell_j})\mapsto\sum_{k=1}^{\ell_j}x_k\mathrm{d}_{\xi}\nu_{jk}$$ integrates to a Lie group isomorphism $(\tau_j)_{\xi}:\mathbb{T}_j\overset{\cong}\longrightarrow (G_j)_{\xi}$.
\end{proposition}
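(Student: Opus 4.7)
The plan is to reduce the statement to the fact that $(G_j)_{\xi}$ is a torus whose cocharacter lattice is generated by the tangent vectors $\mathrm{d}_{\xi}\nu_{jk}$, after which the claim follows formally from the standard description of a torus as the quotient of its Lie algebra by $2\pi$ times its cocharacter lattice.

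First I would note that, since $\xi\in(\g_j^*)_{\text{reg}}$, the coadjoint stabilizer $(G_j)_{\xi}$ is a maximal torus of $G_j$; in particular, it is compact, connected, and abelian of rank $\ell_j$, with Lie algebra $(\g_j)_{\xi}$ of the same dimension. This is a standard fact about coadjoint actions of compact connected Lie groups, once one transports it across the identification $\g_j\cong\g_j^*$ induced by the chosen invariant inner product. Consequently, the exponential map $\exp\big\vert_{(\g_j)_{\xi}}:(\g_j)_{\xi}\longrightarrow (G_j)_{\xi}$ is a surjective Lie group homomorphism with kernel exactly $2\pi\Lambda^{\vee}((G_j)_{\xi})$.

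Next, I would invoke \cite[Proposition 5]{CrooksWeitsman} --- the very fact already used in the proof of Proposition \ref{Proposition: Integral} --- which asserts that $\{\mathrm{d}_{\xi}\nu_{j1},\ldots,\mathrm{d}_{\xi}\nu_{j\ell_j}\}$ is a $\mathbb{Z}$-basis of $\Lambda^{\vee}((G_j)_{\xi})\subset (\g_j)_{\xi}$. This immediately tells us that the linear map in the statement carries the standard basis of $\mathbb{R}^{\ell_j}$ onto this $\mathbb{Z}$-basis, so it is a linear isomorphism $\mathbb{R}^{\ell_j}\overset{\cong}\longrightarrow(\g_j)_{\xi}$ that restricts to a lattice isomorphism $\mathbb{Z}^{\ell_j}\overset{\cong}\longrightarrow\Lambda^{\vee}((G_j)_{\xi})$, and hence to a bijection $2\pi\mathbb{Z}^{\ell_j}\overset{\cong}\longrightarrow 2\pi\Lambda^{\vee}((G_j)_{\xi})$.

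Passing to quotients finishes the argument. The source $\mathbb{T}_j=\operatorname{U}(1)^{\ell_j}$ is canonically $\mathbb{R}^{\ell_j}/2\pi\mathbb{Z}^{\ell_j}$, and the target torus $(G_j)_{\xi}$ is $(\g_j)_{\xi}/2\pi\Lambda^{\vee}((G_j)_{\xi})$, so the linear map descends to the claimed Lie group isomorphism $(\tau_j)_{\xi}:\mathbb{T}_j\overset{\cong}\longrightarrow (G_j)_{\xi}$. There is essentially no obstacle beyond the cited lattice description of $\Lambda^{\vee}((G_j)_{\xi})$; every remaining step is formal once the connectedness of $(G_j)_{\xi}$ and the stated $\mathbb{Z}$-basis are in hand.
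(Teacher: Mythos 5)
The paper offers no proof of this proposition at all---it is stated as a rephrasing of \cite[Proposition 7]{CrooksWeitsman}---so there is nothing internal to compare against. Your argument is correct and is the natural one: regularity of $\xi$ makes $(G_j)_{\xi}$ a maximal torus, so $\exp\big\vert_{(\g_j)_{\xi}}$ is surjective with kernel $2\pi\Lambda^{\vee}((G_j)_{\xi})$, and \cite[Proposition 5]{CrooksWeitsman} (the same input the paper uses to prove Proposition \ref{Proposition: Integral}) identifies $\{\mathrm{d}_{\xi}\nu_{j1},\ldots,\mathrm{d}_{\xi}\nu_{j\ell_j}\}$ as a $\mathbb{Z}$-basis of that lattice, after which the descent to an isomorphism of quotient tori is formal.
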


Let $\sigma_j:\g^*\longrightarrow\g_j^*$ denote the restriction map for each $j\in\{0,\ldots,m\}$. This allows us to form the functions \begin{equation}\label{Equation: Eigenvalue functions}\lambda_{jk}\coloneqq\nu_{jk}\circ\sigma_j:\g^*\longrightarrow\mathbb{R}\end{equation} for $j\in\{0,\ldots,m\}$ and $k\in\{1,\ldots,\ell_j\}$, and subsequently set
$$\label{Equation: Enumeration}\lambda\coloneqq(\lambda_1,\ldots,\lambda_{\text{b}})\coloneqq(\lambda_{01},\ldots,\lambda_{0\ell},\lambda_{11},\ldots,\lambda_{1\ell_1}\ldots,\lambda_{m1},\ldots,\lambda_{m\ell_m}):\g^*\longrightarrow\mathbb{R}^{\text{b}}.$$ It follows that $\lambda$ is smooth on the open subset $$\bigcap_{j=0}^m\sigma_j^{-1}((\g_j^*)_{\text{reg}})\subset\g^*,$$ in which context we define the \textit{strongly regular} locus $$\g^*_{\text{s-reg}}\coloneqq\left\{\xi\in\bigcap_{j=0}^m\sigma_j^{-1}((\g_j^*)_{\text{reg}}):\mathrm{rank}\hspace{2pt}\mathrm{d}_{\xi}\lambda=\text{b}\right\}.$$ On the other hand, consider the product torus
$$\mathbb{T}\coloneqq\prod_{j=0}^m\mathbb{T}_j$$ and its Lie algebra $\mathbb{R}^{\text{b}}=\prod_{j=0}^m\mathbb{R}^{\ell_j}$.
The following are direct consequences of \cite[Theorem 3.4]{GuilleminSternbergGC} and \cite[Proposition 8]{CrooksWeitsman}.

\begin{proposition}\label{Proposition: Summary}
Let $M$ be a Hamiltonian $G$-space with moment map $\mu:M\longrightarrow\g^*$.
\begin{itemize}
\item[\textup{(i)}] The composite map $\lambda\circ\mu:M\longrightarrow\mathbb{R}^{\text{b}}$ restricts to a moment map for a Hamiltonian action of $\mathbb{T}$ on $\mu^{-1}(\g^*_{\emph{s-reg}})$.
\item[\textup{(ii)}] Suppose that $\xi\in\g^*_{\emph{s-reg}}$ and $m\in\mu^{-1}(\xi)$. One then has \begin{equation}\label{Equation: Action} t\cdot m=(\tau_j)_{\sigma_j(\xi)}(t)\cdot m\end{equation} for all $j\in\{0,\ldots,m\}$ and $t\in\mathbb{T}_j$, where $(\tau_j)_{\sigma_j(\xi)}:\mathbb{T}_j\overset{\cong}\longrightarrow (G_j)_{\sigma_j(\xi)}$ is the isomorphism in Proposition \ref{Proposition: Integration}, and the left and right-hand sides of \eqref{Equation: Action} denote the actions of $t\in\mathbb{T}_j\subset\mathbb{T}$ on $\mu^{-1}(\g^*_{\emph{s-reg}})$ and $(\tau_j)_{\sigma_j(\xi)}(t)\in(G_j)_{\sigma_j(\xi)}\subset G_j$ on $M$, respectively.
\end{itemize}
\end{proposition}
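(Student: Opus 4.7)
The plan is to assemble both parts from the cited results of Guillemin--Sternberg and Crooks--Weitsman together with standard Poisson geometry. The Hamiltonian vector fields of the functions $\lambda_{jk}\circ\mu$ on $M$ are the object of primary interest in both parts.

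For part (i), I would begin by recording that the restriction map $\sigma_j:\g^*\longrightarrow\g_j^*$ is Poisson with respect to the KKS structures (since the inclusion $\g_j\hookrightarrow\g$ is a Lie algebra homomorphism), and that the moment map $\mu$ pulls Poisson brackets on $\g^*$ back to Poisson brackets on $M$. It follows that $\{\lambda_{jk}\circ\mu,\lambda_{j'k'}\circ\mu\}_{M}$ is the pullback of $\{\nu_{jk}\circ\sigma_j,\nu_{j'k'}\circ\sigma_{j'}\}_{\g^*}$, which vanishes by \cite[Theorem 3.4]{GuilleminSternbergGC}. By the definition of $\g^*_{\text{s-reg}}$, the functions $\lambda_{jk}\circ\mu$ have pointwise linearly independent differentials on $\mu^{-1}(\g^*_{\text{s-reg}})$, so their Hamiltonian vector fields generate a local $\mathbb{R}^{\text{b}}$-action. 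To promote this to a $\mathbb{T}$-action, I would invoke \cite[Proposition 8]{CrooksWeitsman}, which ensures the relevant flows are periodic with respect to the lattice corresponding to $\mathbb{T}=\prod_j\mathbb{T}_j=\operatorname{U}(1)^{\text{b}}$. The moment map property for this $\mathbb{T}$-action is then automatic from construction.

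For part (ii), I would apply the standard moment map identity: because $\nu_{jk}$ is $G_j$-invariant, the Hamiltonian vector field of $\lambda_{jk}\circ\mu=\nu_{jk}\circ\sigma_j\circ\mu$ at a point $m\in\mu^{-1}(\xi)$ equals the infinitesimal action of $\mathrm{d}_{\sigma_j(\xi)}\nu_{jk}\in(\g_j)_{\sigma_j(\xi)}$ on $m$, via the restriction of the $G$-action on $M$ to $G_j$. Integrating the $\mathbb{T}_j$ summand of the $\mathbb{T}$-action therefore factors through the exponential map $\mathbb{R}^{\ell_j}\longrightarrow (G_j)_{\sigma_j(\xi)}$, which, by Proposition \ref{Proposition: Integration}, coincides with the composition of the quotient $\mathbb{R}^{\ell_j}\longrightarrow\mathbb{T}_j$ with the isomorphism $(\tau_j)_{\sigma_j(\xi)}:\mathbb{T}_j\overset{\cong}\longrightarrow(G_j)_{\sigma_j(\xi)}$. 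This yields \eqref{Equation: Action}.

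The principal technical point to verify is that the $\mathbb{R}^{\text{b}}$-action really descends to $\mathbb{T}$, which reduces to matching the flow lattice of the Hamiltonian vector fields with the $\mathbb{Z}$-bases $\{\phi_{jk}\}$ of $\Lambda^{\vee}(T_j)$ used to define the $\nu_{jk}$. This bookkeeping is precisely what Proposition \ref{Proposition: Integration} and \cite[Proposition 8]{CrooksWeitsman} accomplish at each strongly regular point, so beyond citing them the remaining work is a routine verification that the pointwise identification assembles into a smooth group action on the open set $\mu^{-1}(\g^*_{\text{s-reg}})$.
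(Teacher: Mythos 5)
Your proposal is correct and follows exactly the route the paper intends: the paper offers no written proof, stating only that the proposition is a direct consequence of \cite[Theorem 3.4]{GuilleminSternbergGC} and \cite[Proposition 8]{CrooksWeitsman}, which are precisely the two results you invoke (together with the standard collective-function identity for part (ii)). Your write-up simply makes explicit the derivation the paper leaves to the reader.
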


\subsection{The case $G=\operatorname{U}(n)$}\label{Subsection: Special case}
It will be important to specialize some of the above-mentioned constructions to the case in which $G=\operatorname{U}(n)$ is the group of unitary $n\times n$ matrices. In this case, $\ell=n$, $\text{b}=\frac{n(n+1)}{2}$, and $\g=\operatorname{u}(n)$ is the real Lie algebra of skew-Hermitian $n\times n$ matrices. We will take $$\g\otimes_{\mathbb{R}}\g\longrightarrow\mathbb{R},\quad x\otimes y\mapsto -\mathrm{tr}(xy)$$ as our $G$-invariant inner product. By composing the induced isomorphism $\g^*\overset{\cong}\longrightarrow\g$ with multiplication by $i$, we will regard $\g^*$ as the real vector space of Hermitian $n\times n$ matrices. The pairing between $\g$ and $\g^*$ is then given by $$\xi(x)=-i\mathrm{tr}(x\xi)$$ for all $x\in\g$ and $\xi\in\g^*$. 

Let us consider the subgroup $$G_j\coloneqq\left\{\left[\begin{array}{ c | c }
    I_{j} & 0 \\
    \hline
    0 & A
 \end{array}\right]:A\in\operatorname{U}(n-j)\right\}\subset G$$
and its Lie algebra $\g_j\subset\g$ for $j\in\{0,\ldots,n-1\}$. It is clear that $G=G_0\supset G_1\supset\cdots\supset G_{n-1}$ and $\g=\g_0\supset\g_1\supset\cdots\supset\g_{n-1}$. We will identify $\g_j$ with the Lie algebra $\mathfrak{u}(n-j)$ of skew-Hermitian $n\times n$ matrices, and use the approach outlined above to identify $\g_j^*$ with the vector space of Hermitian $(n-j)\times (n-j)$ matrices. The restriction map $\sigma_j:\g^*\longrightarrow\g_j^*$ then sends $\xi\in\g^*$ to the bottom-right $(n-j)\times(n-j)$ corner of $\xi$.

Observe that $$T_j\coloneqq\left\{\left[\begin{array}{ c | ccc }
    I_j & & 0 & \\
    \hline
     & z_1 & &\\
    0 & & \ddots & \\
     & & & z_{n-j}
  \end{array}\right]:z_1,\ldots,z_{n-j}\in\operatorname{U}(1)\right\}\subset G_j$$ is a maximal torus with Lie algebra
$$\mathfrak{t}_j\coloneqq\left\{\left[\begin{array}{ c | ccc }
    0 & & 0 & \\
    \hline
     & ia_1 & &\\
    0 & & \ddots & \\
     & & & ia_{n-j}
  \end{array}\right]:a_1,\ldots,a_{n-j}\in\mathbb{R}\right\}\subset\g_j$$
for each $j\in\{0,\ldots,n-1\}$. For each $k\in\{1,\ldots,n-j\}$, let $\phi_{jk}\in\mathfrak{t}_j$ be what results from taking $a_k=1$ and $a_r=0$ for all $r\neq k$. It follows that $\{\phi_{j1},\ldots,\phi_{j(n-j)}\}$ is a $\mathbb{Z}$-basis of the lattice $\Lambda^{\vee}(T_j)\subset\mathfrak{t}_j$. We also have the fundamental Weyl chamber
$$(\mathfrak{t}_j)_{+}\coloneqq\left\{\left[\begin{array}{ c | ccc }
    0 & & 0 & \\
    \hline
     & ia_1 & &\\
    0 & & \ddots & \\
     & & & ia_{n-j}
  \end{array}\right]:\let\scriptstyle\textstyle\substack{a_1,\ldots,a_{n-j}\in\mathbb{R}\\ a_1\geq\cdots\geq a_{n-j}}\right\}\subset\mathfrak{t}_j$$ and corresponding subset
$$(\mathfrak{t}^*_j)_{+}\coloneqq\left\{\left[\begin{array}{ c | ccc }
    0 & & 0 & \\
    \hline
     & a_1 & &\\
    0 & & \ddots & \\
     & & & a_{n-j}
  \end{array}\right]:\let\scriptstyle\textstyle\substack{a_1,\ldots,a_{n-j}\in\mathbb{R}\\ a_1\geq\cdots\geq a_{n-j}}\right\}\subset\g_j^*\subset\g^*.$$ The functions $\lambda_{jk}:\g^*\longrightarrow\mathbb{R}$ in \eqref{Equation: Eigenvalue functions} are then defined on each $\xi\in\g^*$ as follows: $$\lambda_{j1}(\xi)\geq\cdots\geq\lambda_{j(n-j)}(\xi)$$ are the eigenvalues of the bottom-right $(n-j)\times(n-j)$ corner of $\xi$. One also has \begin{equation}\label{Equation: SR}\g^*_{\text{s-reg}}=\left\{\xi\in\g^*:\let\scriptstyle\textstyle\substack{\lambda_{j1}(\xi)>\cdots>\lambda_{j(n-j)}(\xi)\text{ for all }j\in\{0,\ldots,n-1\}\\ \lambda_{0k}(\xi)>\cdots>\lambda_{(n-k)k}(\xi)\text{ for all }k\in\{1,\ldots,n\}}\right\}\end{equation} as the strongly regular locus.

\subsection{The Hamiltonian geometry of $T^*G$}\label{Subsection: Hamiltonian geometry} Retain the notation and conventions used in Section \ref{Subsection: GC theory}. Consider the cotangent bundle $T^*G$, tautological one-form $\theta\in\Omega^1(T^*G)$, and symplectic form $\omega\coloneqq\mathrm{d}\theta\in\Omega^2(T^*G)$.

Now let $\mathrm{Ad}:G\longrightarrow\operatorname{GL}(\g^*)$ denote the coadjoint representation of $G$. Let us also consider the action of $G\times G$ on $G$ defined by $$(g_1,g_2)\cdot g\coloneqq g_1gg_2^{-1},\quad (g_1,g_2)\in G\times G,\text{ }g\in G,$$ as well as its cotangent lift to a Hamiltonian $(G\times G)$-action on $(T^*G,\omega)$.  Using the left trivialization to identify $T^*G$ with $G\times\g^*$, one finds this Hamiltonian action to be given by $$(g_1,g_2)\cdot(g,\xi)=(g_1gg_2^{-1},\mathrm{Ad}^*_{g_2}(\xi)),\quad (g_1,g_2)\in G\times G,\text{ }(g,\xi)\in G\times\g^*=T^*G.$$ It is also straightforward to verify that $$\phi\coloneqq(\phi_1,\phi_2):T^*G\longrightarrow\g^*\times\g^*,\quad (g,\xi)\mapsto (\mathrm{Ad}^*_g(\xi),-\xi),\quad (g,\xi)\in G\times\g^*=T^*G$$ is a moment map. 

Given $x=(x_1,x_2)\in\g\times\g$, consider the function
$$\phi^x:T^*G\longrightarrow\mathbb{R},\quad\beta\mapsto(\phi_1(\beta))(x_1)+(\phi_2(\beta))(x_2)$$ and vector field $\underline{x}$ on $T^*G$ defined by
$$\underline{x}_{\beta}\coloneqq\frac{d}{dt}\bigg\vert_{t=0}(\exp(-tx_1),\exp(-tx_2))\cdot\beta,\quad\beta\in T^*G.$$ The moment map $\phi:T^*G\longrightarrow\g^*\times\g^*$ is then completely determined by the property that \begin{equation}\label{Equation: Moment map condition}\phi^x=\theta(\underline{x})\end{equation} for all $x\in\g\times\g$.

\subsection{Definition of the system for $G=\operatorname{U}(n)$}\label{Subsection: Double GC}
We take $G$ to be $\operatorname{U}(n)$ for the balance of this paper, so that $\g=\mathfrak{u}(n)$, $\ell=n$, $\text{b}=\frac{n(n+1)}{2}$, etc. The definitions and conventions in Section \ref{Subsection: GC theory} should therefore be understood as specialized to their counterparts in Section \ref{Subsection: Special case}. One reason for continuing to use Lie type-independent notation is that it makes for a more concise and less coordinatized exposition. A second reason is that most of our results hold for compact connected Lie groups other than $\operatorname{U}(n)$. Our hope is all results in this paper can be generalized to the case of an arbitrary compact connected Lie group, e.g. via \cite{Lane}. 

Recall the map $\lambda:\g^*\longrightarrow\mathbb{R}^{\text{b}}$, subset $\g^*_{\text{s-reg}}\subset\g^*$, and rank-$\text{b}$ torus $\mathbb{T}$ discussed in the previous section. Consider the product map $(\lambda,\lambda):\g^*\times\g^*\longrightarrow\mathbb{R}^{\text{b}}\times\mathbb{R}^{\text{b}}$ and composition $$\varphi\coloneqq(\lambda,\lambda)\circ\phi=(\lambda\circ\phi_1,\lambda\circ\phi_2):T^*G\longrightarrow\mathbb{R}^{\text{b}}\times\mathbb{R}^{\text{b}}.$$ Proposition \ref{Proposition: Summary}(i) implies that $\varphi$ restricts to a moment map for a Hamiltonian action of $\mathbb{T}\times\mathbb{T}$ on the open subset
$$(T^*G)_{\text{s-reg}}\coloneqq\phi^{-1}(\g^*_{\text{s-reg}}\times \g^*_{\text{s-reg}})=\phi_1^{-1}(\g^*_{\text{s-reg}})\cap \phi_2^{-1}(\g^*_{\text{s-reg}})\subset T^*G.$$ At the same time, let us consider the subtori $$\mathbb{T}_{j1}\coloneqq \mathbb{T}_j\times\{e\}\subset\mathbb{T}\times\mathbb{T}\quad\text{and}\quad\mathbb{T}_{j2}\coloneqq\{e\}\times\mathbb{T}_j\subset\mathbb{T}\times\mathbb{T}$$ for each $j\in\{0,\ldots,m\}$. The actions of these subtori on $(T^*G)_{\text{s-reg}}$ are then given by the following consequence of Proposition \ref{Proposition: Summary}(ii): \begin{equation}\label{Equation: Action 2} (t,e)\cdot\beta=((\tau_j)_{\sigma_j(\xi)}(t),e)\cdot \beta\quad\text{and}\quad (e,t)\cdot\beta=(e,(\tau_j)_{\sigma_j(\eta)}(t))\cdot\beta\quad\end{equation} for all $j\in\{0,\ldots,m\}$, $t\in\mathbb{T}_j$, and $\beta\in (T^*G)_{\text{s-reg}}$, where $(\xi,\eta)=\phi(\beta)$.

Let us now consider the subtorus
$$\mathbb{S}\coloneqq\left\{((t_1,\ldots,t_{\ell},\underbrace{1,\ldots,1}_{\text{b}-\ell\text{ times}}),(t_{\ell},\ldots,t_1,\underbrace{1,\ldots,1}_{\text{b}-\ell\text{ times}})):(t_1,\ldots,t_{\ell})\in\mathbb{T}_0\right\}\subset\mathbb{T}\times\mathbb{T}$$
and its Lie algebra
\begin{equation}\label{Equation: Explicit Lie algebra}\mathfrak{S}\coloneqq\{((x_1,\ldots,x_{\ell},\underbrace{0,\ldots,0}_{\text{b}-\ell\text{ times}}),(x_{\ell},\ldots,x_1,\underbrace{0,\ldots,0}_{\text{b}-\ell\text{ times}})):(x_1,\ldots,x_{\ell})\in\mathbb{R}^{\ell}\}\subset\mathbb{R}^{\text{b}}\times\mathbb{R}^{\text{b}}.\end{equation}
The quotient torus
$$\mathcal{T}\coloneqq(\mathbb{T}\times\mathbb{T})/\mathbb{S}$$ then has $$\mathcal{B}\coloneqq\mathfrak{S}^{\perp}\subset\mathbb{R}^{\text{b}}\times\mathbb{R}^{\text{b}}$$ as the dual of its Lie algebra, where the orthogonal complement is with respect to the dot product. 

\begin{proposition}\label{Proposition: Trivial}
The following statements hold.
\begin{itemize}
\item[\textup{(i)}] We have $\varphi(T^*G)\subset\mathcal{B}$.
\item[\textup{(ii)}] The action of $\mathbb{T}\times\mathbb{T}$ on $(T^*G)_{\emph{s-reg}}$ descends to a free Hamiltonian action of $\mathcal{T}$ on $(T^*G)_{\emph{s-reg}}$.
\item[\textup{(iii)}] The restriction of $\varphi$ to a map $(T^*G)_{\emph{s-reg}}\longrightarrow\mathcal{B}$ is a moment map for the Hamiltonian action of $\mathcal{T}$ on $(T^*G)_{\emph{s-reg}}$.
\item[\textup{(iv)}] The non-empty fibers of this moment map are precisely the orbits of $\mathcal{T}$ in $(T^*G)_{\emph{s-reg}}$.
\end{itemize}
\end{proposition}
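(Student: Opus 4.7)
The plan is to prove the four items in order, with (i) the geometric input that makes (ii)--(iv) work. For (i), I would evaluate $\varphi = (\lambda \circ \phi_1, \lambda \circ \phi_2)$ at $\beta = (g, \xi) \in G \times \g^* = T^*G$. Since each $\nu_{0k}$ is $G$-invariant and, in the $\operatorname{U}(n)$-setting of Section \ref{Subsection: Special case}, extracts the $k$-th largest eigenvalue, the first $\ell$ entries of $\lambda(\phi_1(\beta)) = \lambda(\mathrm{Ad}^*_g \xi)$ coincide with those of $\lambda(\xi)$, say $(x_1, \ldots, x_\ell)$, while the first $\ell$ entries of $\lambda(\phi_2(\beta)) = \lambda(-\xi)$ are $(-x_\ell, \ldots, -x_1)$. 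Comparing against the explicit formula \eqref{Equation: Explicit Lie algebra} for $\mathfrak{S}$ yields $\varphi(\beta) \in \mathfrak{S}^\perp = \mathcal{B}$.

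Part (ii) splits into descent and freeness. Descent is formal: by (i), $\varphi \cdot y \equiv 0$ for every $y \in \mathfrak{S}$, so the moment-map condition $\iota_{\underline{y}} \omega = \mathrm{d}(\varphi \cdot y) = 0$ combined with non-degeneracy of $\omega$ kills the infinitesimal action of $\mathfrak{S}$, and hence the connected subgroup $\mathbb{S}$ acts trivially. For freeness, I would compute $\mathrm{rank}\,\mathrm{d}_\beta\varphi = 2\mathrm{b} - \ell$ on $(T^*G)_{\text{s-reg}}$ (the upper bound is (i); the lower bound combines the rank of $\mathrm{d}_\beta \phi$ with the rank $\mathrm{b}$ of each $\lambda$ guaranteed by Proposition \ref{Proposition: Summary}), which pins the infinitesimal $(\mathbb{T} \times \mathbb{T})$-stabilizer of $\beta$ to $\mathfrak{S}$, so the identity component of that stabilizer is exactly $\mathbb{S}$. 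To rule out disconnected components, I would argue from \eqref{Equation: Action 2} together with the $(G \times G)$-stabilizer $\{(gg_2 g^{-1}, g_2) : g_2 \in G_\xi\}$ of $\beta$. For each $j \geq 1$, the strict interlacing packaged into \eqref{Equation: SR} forces $G_1 \cap G_\xi = \{e\}$ (the off-diagonal block-vector in $\xi$ has no zero component in the eigenbasis of the bottom-right $(n-1) \times (n-1)$ corner), and iterating down the filtration trivializes all components of $(t, s)$ beyond $(t_0, s_0)$; for $j = 0$, the identity $\mathrm{d}_{-\xi}\nu_{0k} = \mathrm{d}_\xi\nu_{0, \ell + 1 - k}$ gives $(\tau_0)_{-\xi}(s_0) = (\tau_0)_\xi(s_\ell, \ldots, s_1)$ and $(\tau_0)_{\phi_1(\beta)}(t_0) = g (\tau_0)_\xi(t_0) g^{-1}$, whereupon the stabilizer condition $(\tau_0)_{\phi_1(\beta)}(t_0) = g (\tau_0)_{-\xi}(s_0) g^{-1}$ forces $(t_0, s_0) \in \mathbb{S}$.

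Part (iii) is then immediate from (i), since the moment map for the quotient $\mathcal{T}$ is the $(\mathbb{T} \times \mathbb{T})$-moment map postcomposed with the projection $\mathbb{R}^{2\mathrm{b}} \to \mathfrak{S}^\perp = \mathcal{B}$, and by (i) the map $\varphi$ already lands in $\mathcal{B}$. For (iv), the dimension identity $\dim \mathcal{T} = 2\mathrm{b} - \ell = \dim(T^*G)_{\text{s-reg}} - \dim \mathcal{B}$ together with the freeness from (ii) forces each orbit to be open in the fiber of $\varphi$ through its points, so the non-empty fibers are disjoint unions of $\mathcal{T}$-orbits; I would upgrade this to fiber $=$ orbit by factoring $\varphi$ through $\phi$ and invoking the connectedness of the maximal-torus fibers of $\phi$ and of the Gelfand--Cetlin fibers over the relevant coadjoint orbits from \cite{GuilleminSternbergGC}. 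The main obstacle is the stabilizer calculation in (ii): that step is the only place the sharp content of \eqref{Equation: SR} is really needed, namely to control the interaction of the nested centralizers $(G_j)_{\sigma_j(\pm \xi)}$ inside $G$, while everything else is eigenvalue bookkeeping, a dimension count, or a formal consequence of moment-map theory.
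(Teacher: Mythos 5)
Your overall architecture is sound, and parts (i) and (iii) match the paper: the eigenvalue-reversal computation placing $\varphi(\beta)$ in $\mathfrak{S}^{\perp}$, and the observation that the vanishing of the $\mathfrak{S}$-component of $\varphi$ forces $\mathbb{S}$ to act trivially, are exactly the paper's opening moves. You diverge on freeness and on (iv). For freeness the paper avoids any rank computation: it introduces the complementary subtorus $\mathbb{T}_{\text{comp}}=\mathbb{T}_{01}\times(\mathbb{T}_{11}\times\cdots\times\mathbb{T}_{m1})\times(\mathbb{T}_{12}\times\cdots\times\mathbb{T}_{m2})$ to $\mathbb{S}$, uses the $(\mathbb{T}\times\mathbb{T})$-equivariance of $\phi$ together with the already-known freeness of $\mathbb{T}_1\times\cdots\times\mathbb{T}_m$ on $\g^*_{\text{s-reg}}$ to push the stabilizer into $\mathbb{T}_{01}$, and then kills it using \eqref{Equation: Action 2} and the freeness of $G\times\{e\}$ on $T^*G$. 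For (iv) the paper shows that the relevant symplectic quotients are points, via the symplectomorphism $(T^*G)_{\text{s-reg}}\sll{y}\mathbb{T}_{01}\cong(\mathcal{O}_{\xi}\cap\g^*_{\text{s-reg}})\times(\mathcal{O}_{-\xi}\cap\g^*_{\text{s-reg}})$ and \cite[Section 5]{GuilleminSternbergGC}; your ``open orbits plus connected fibers'' route is an equivalent repackaging drawing on the same Guillemin--Sternberg input, and is fine provided you actually prove connectedness of the fibers (e.g.\ by fibering them over a product of two connected Gelfand--Cetlin fibers with fiber a coset of the connected torus $G_{\xi}$) rather than merely invoking it.

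Two steps in your freeness argument are under-justified. First, the lower bound $\mathrm{rank}\,\mathrm{d}_{\beta}\varphi\geq 2\text{b}-\ell$ does not follow from ``the rank of $\mathrm{d}_{\beta}\phi$ plus the rank $\text{b}$ of $\lambda$'': $\phi$ is far from a submersion onto $\g^*\times\g^*$ (its image is $\{(\eta,-\xi):\eta\in G\cdot\xi\}$), and for a torus moment map one has $\mathrm{rank}\,\mathrm{d}_{\beta}\varphi=2\text{b}-\dim\mathrm{Stab}(\beta)$, so the rank claim is logically equivalent to the stabilizer-dimension claim you want to deduce from it. What the lower bound actually requires is that $\lambda$ restricted to a regular coadjoint orbit has rank $\text{b}-\ell$ on the strongly regular locus, so that the orbit directions contribute $\text{b}-\ell$ and the transverse $\delta\xi$-directions a further $\text{b}$; say this explicitly or dispense with the rank computation as the paper does. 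Second, ``iterating down the filtration'' to trivialize the components beyond $(t_0,s_0)$ is a re-derivation of the freeness of the Gelfand--Cetlin torus on $\g^*_{\text{s-reg}}$, and the iteration hides a genuine subtlety: by \eqref{Equation: Action 2} the group element implementing $t_j$ depends on the point being acted on, so the composite of the actions of $t_1,\ldots,t_m$ is not conjugation by an element of a fixed nested centralizer, and the observation $G_1\cap G_{\xi}=\{e\}$ does not propagate on its own. It is cleaner to quote that freeness, as the paper does. Your $j=0$ analysis, identifying the residual stabilizer with $\mathbb{S}$ via $\mathrm{d}_{-\xi}\nu_{0k}=\mathrm{d}_{\xi}\nu_{0(\ell+1-k)}$, is correct and is a nice explicit counterpart to the paper's appeal to the freeness of $G\times\{e\}$.
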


\begin{proof}
Composing $\varphi\big\vert_{(T^*G)_{\text{s-reg}}}$ with the orthogonal projection $\mathbb{R}^{\text{b}}\times\mathbb{R}^{\text{b}}\longrightarrow\mathfrak{S}$ yields a moment map for the Hamiltonian action of $\mathbb{S}$ on $(T^*G)_{\text{s-reg}}$. This implies that $\mathbb{S}$ acts trivially on $(T^*G)_{\text{s-reg}}$ if and only if $\varphi((T^*G)_{\text{s-reg}})\subset\mathcal{B}$. It will therefore suffice to prove (i), (iv), and the assertion that $\mathcal{T}$ acts freely on $(T^*G)_{\text{s-reg}}$.

Suppose that $(g,\xi)\in G\times\g^*=T^*G$. One has $$\lambda_k(\phi_1(g,\xi))=\lambda_k(\mathrm{Ad}_g^*(\xi))=\lambda_k(\xi)=-\lambda_{\ell+1-k}(-\xi)=-\lambda_{\ell+1-k}(\phi_2(g,\xi))$$ for all $k\in\{1,\ldots,\ell\}$. It follows that $\varphi(g,\xi)\in\mathbb{R}^{\text{b}}\times\mathbb{R}^{\text{b}}$ takes the form
\begin{equation}\label{Equation: Perp}\varphi(g,\xi)=((y_1,\ldots,y_{\ell},z_{1},\ldots,z_{\text{b}-\ell}),(-y_{\ell},\ldots,-y_1,w_1,\ldots,w_{\text{b}-\ell})),\end{equation} where $z_1,\ldots,z_{\text{b}-\ell},w_1,\ldots,w_{\text{b}-\ell}\in\mathbb{R}$ and $y_k=\lambda_k(\phi_1(g,\xi))$ for all $k\in\{1,\ldots,\ell\}$. This combines with \eqref{Equation: Explicit Lie algebra} to imply that $\varphi(T^*G)\subset\mathcal{B}$, proving (i).

We now prove that $\mathcal{T}$ acts freely on $(T^*G)_{\text{s-reg}}$. A first observation in this direction is that
\begin{align*}\mathbb{T}_{\text{comp}} & \coloneqq\left\{((t_1,\ldots,t_{\text{b}}),(\underbrace{1,\ldots,1}_{\ell\text{ times}},s_1,\ldots,s_{\text{b}-\ell})):(t_1,\ldots,t_{\text{b}})\in\mathbb{T},\text{ }(s_1,\ldots,s_{\text{b}-\ell})\in\prod_{j=1}^m\mathbb{T}_{j}\right\} \\ &=\mathbb{T}_{01}\times(\mathbb{T}_{11}\times\cdots\times \mathbb{T}_{m1})\times(\mathbb{T}_{12}\times\cdots\times \mathbb{T}_{m2})\end{align*} is a complementary subtorus to $\mathbb{S}$ in $\mathbb{T}\times\mathbb{T}$. It follows that $\mathcal{T}$ acts freely on $(T^*G)_{\text{s-reg}}$ if and only if $\mathbb{T}_{\text{comp}}$ acts freely on $(T^*G)_{\text{s-reg}}$. 

Suppose that $\beta\in (T^*G)_{\text{s-reg}}$. Since $\phi\big\vert_{(T^*G)_{\text{s-reg}}}:(T^*G)_{\text{s-reg}}\longrightarrow\g^*_{\text{s-reg}}\times\g^*_{\text{s-reg}}$ is $\mathbb{T}\times\mathbb{T}$-equivariant and  $\mathbb{T}_{1}\times\cdots\times\mathbb{T}_{m}\subset\mathbb{T}$ acts freely on $\g^*_{\text{s-reg}}$, the $\mathbb{T}_{\text{comp}}$-stabilizer of $\beta$ is necessarily contained in $\mathbb{T}_{01}$. Equation \eqref{Equation: Action 2} allows one to identify the $\mathbb{T}_{01}$-stabilizer of $\beta$ with the $G_{\xi}$-stabilizer of $\beta$, where $\xi=\phi_1(\beta)\in\g^*_{\text{s-reg}}$. The latter stabilizer is trivial, as $G\times\{e\}$ acts freely on $T^*G$. These last few sentences collectively imply that the $\mathbb{T}_{\text{comp}}$-stabilizer of $\beta$ is trivial. In light of the previous paragraph, we conclude that $\mathcal{T}$ acts freely on $(T^*G)_{\text{s-reg}}$.

It remains only to prove (iv). This would follow from the symplectic quotient $(T^*G)_{\text{s-reg}}\sll{x}\mathcal{T}$ being a point for all $x\in\varphi((T^*G)_{\text{s-reg}})$. An equivalent condition would be for $(T^*G)_{\text{s-reg}}\sll{x}\mathbb{T}_{\text{comp}}$ to be a point for all $x\in\varphi_{\text{comp}}((T^*G)_{\text{s-reg}})$, where 
$$\varphi_{\text{comp}}:T^*G\longrightarrow\mathbb{R}^{\text{b}}\times\mathbb{R}^{\text{b}-\ell}$$ is obtained by composing $\varphi:T^*G\longrightarrow\mathbb{R}^{\text{b}}\times\mathbb{R}^{\text{b}}$ with the orthogonal projection onto the subspace $\mathbb{R}^{\text{b}}\times(\{0\}\times\mathbb{R}^{\text{b}-\ell})=\mathbb{R}^{\text{b}}\times\mathbb{R}^{\text{b}-\ell}$. To this end, we fix $x=(y_1,\ldots,y_{\text{b}},z_1,\ldots,z_{\text{b}-\ell})\in\varphi_{\text{comp}}((T^*G)_{\text{s-reg}})$. Consider the element $y\coloneqq(y_1,\ldots,y_{\ell})\in\mathbb{R}^{\ell}$, diagonal matrix $\xi\coloneqq\mathrm{diag}(y_1,\ldots,y_{\ell})\in\g^*_{\text{s-reg}}$, and coadjoint orbits $\mathcal{O}_\xi\coloneqq G\cdot\xi\subset\g^*$ and $\mathcal{O}_{-\xi}\coloneqq G\cdot (-\xi)\subset\g^*$. The moment map $\phi:T^*G\longrightarrow\g^*\times\g^*$ then restricts and descends to a symplectomorphism
$$(T^*G)_{\text{s-reg}}\sll{y}\mathbb{T}_{01}\longrightarrow(\mathcal{O}_{\xi}\cap\g^*_{\text{s-reg}})\times (\mathcal{O}_{-\xi}\cap\g^*_{\text{s-reg}}),\quad[(g,\eta)]\mapsto (\mathrm{Ad}_g^*(\eta),-\eta).$$ Since $\phi\big\vert_{(T^*G)_{\text{s-reg}}}$ is $(\mathbb{T}\times\mathbb{T})$-equivariant, our symplectomorphism is equivariant for the action of the subtorus $$(\mathbb{T}_{11}\times\cdots\times \mathbb{T}_{m1})\times(\mathbb{T}_{12}\times\cdots\times \mathbb{T}_{m2})\subset\mathbb{T}\times\mathbb{T}.$$ We also know each non-empty symplectic quotient of $(\mathcal{O}_{\xi}\cap\g^*_{\text{s-reg}})\times (\mathcal{O}_{-\xi}\cap\g^*_{\text{s-reg}})$ by this subtorus to be a point \cite[Section 5]{GuilleminSternbergGC}. It follows that any non-empty symplectic quotient of $(T^*G)_{\text{s-reg}}\sll{y}\mathbb{T}_{01}$ by the same subtorus is also a point. In other words, the symplectic quotient $(T^*G)_{\text{s-reg}}\sll{x}\mathbb{T}_{\text{comp}}$ is a point.
\end{proof}

We will henceforth regard $\varphi$ as a map $T^*G\longrightarrow\mathcal{B}$. Proposition \ref{Proposition: Trivial} implies that the restriction of this map to $(T^*G)_{\text{s-reg}}$ is a completely integrable system. The following terminology then captures the relation of $\varphi=(\lambda,\lambda)\circ\phi$ to the Gelfand--Cetlin system $\lambda:\g^*\longrightarrow\mathbb{R}^{\text{b}}$.    

\begin{definition}
We call $\varphi:T^*G\longrightarrow\mathcal{B}$ the \textit{double Gelfand--Cetlin system} on $T^*G$.
\end{definition}

\subsection{Moment map images}\label{Subsection: Images}
We now establish two facts concerning images under the double Gelfand--Cetlin system $\varphi:T^*G\longrightarrow\mathcal{B}\subset\mathbb{R}^{2\text{b}}$. 

\begin{proposition}\label{Proposition: Open dense image}
The image $\varphi((T^*G)_{\emph{s-reg}})\subset\mathbb{R}^{2\emph{b}}$ consists of all $(\alpha_1,\ldots,\alpha_{2\emph{b}})\in\mathbb{R}^{2\emph{b}}$ that satisfy the following conditions:
\begin{itemize}
\item[\textup{(i)}] $\alpha_1>\cdots>\alpha_{n}$;
\item[\textup{(ii)}] $\alpha_{\emph{b}+1+j}=-\alpha_{\ell-j}$ for all $j\in\{0,\ldots,\ell-1\}$;
\item[\textup{(iii)}] $(\alpha_{\ell+1},\ldots,\alpha_{\emph{b}})\in\mathrm{interior}(\mathrm{GC}_{\alpha})$, where $\alpha\coloneqq(\alpha_1,\ldots,\alpha_{\ell})$;
\item[\textup{(iv)}] $(\alpha_{\emph{b}+\ell+1},\ldots,\alpha_{\emph{2b}})\in\mathrm{interior}(\mathrm{GC}_{\alpha^*})$, where $\alpha^*\coloneqq(-\alpha_{\ell},\ldots,-\alpha_1)$.
\end{itemize}
\end{proposition}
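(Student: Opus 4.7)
The plan is to verify the two inclusions separately, using as the key nontrivial input the Guillemin--Sternberg theorem \cite{GuilleminSternbergGC} that the Gelfand--Cetlin map $\lambda$ carries $\mathcal{O}_\alpha$ onto $\{\alpha\}\times\mathrm{GC}_\alpha$ and restricts to a surjection $(\mathcal{O}_\alpha)_{\text{s-reg}}\to\{\alpha\}\times\mathrm{interior}(\mathrm{GC}_\alpha)$ whenever $\alpha$ is strictly decreasing; equivalently, $\g^*_{\text{s-reg}}\cap\mathcal{O}_\alpha$ is precisely the $\lambda$-preimage of $\{\alpha\}\times\mathrm{interior}(\mathrm{GC}_\alpha)$ inside $\mathcal{O}_\alpha$.

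For the forward containment, I would take $\beta=(g,\xi)\in(T^*G)_{\text{s-reg}}$, so that both $\mathrm{Ad}^*_g(\xi)$ and $-\xi$ lie in $\g^*_{\text{s-reg}}$. Writing $\alpha$ for the first $\ell$ entries of $\varphi(\beta)$---these are the eigenvalues of $\xi$ and equivalently of $\mathrm{Ad}^*_g(\xi)$---the defining conditions \eqref{Equation: SR} force $\alpha_1>\cdots>\alpha_\ell$, yielding (i). The top-level Gelfand--Cetlin coordinates of $-\xi$ are its eigenvalues $-\alpha_\ell\geq\cdots\geq-\alpha_1$ in decreasing order, which is exactly (ii). Conditions (iii) and (iv) then follow by applying the Guillemin--Sternberg characterization above to $\mathrm{Ad}^*_g(\xi)\in\mathcal{O}_\alpha\cap\g^*_{\text{s-reg}}$ and $-\xi\in\mathcal{O}_{\alpha^*}\cap\g^*_{\text{s-reg}}$, respectively.

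For the reverse containment, I would start with $(\alpha_1,\ldots,\alpha_{2\text{b}})$ satisfying (i)--(iv), set $\alpha\coloneqq(\alpha_1,\ldots,\alpha_\ell)$ and $\alpha^*\coloneqq(-\alpha_\ell,\ldots,-\alpha_1)$, and construct a $(g,\xi)\in(T^*G)_{\text{s-reg}}$ mapping to this point. Using the Guillemin--Sternberg surjectivity twice, I would pick $\eta\in(\mathcal{O}_{\alpha^*})_{\text{s-reg}}$ with $\lambda(\eta)=(\alpha^*,\alpha_{\text{b}+\ell+1},\ldots,\alpha_{2\text{b}})$ and $\zeta\in(\mathcal{O}_\alpha)_{\text{s-reg}}$ with $\lambda(\zeta)=(\alpha,\alpha_{\ell+1},\ldots,\alpha_\text{b})$. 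Setting $\xi\coloneqq -\eta$, one has $\xi\in\mathcal{O}_\alpha\cap\g^*_{\text{s-reg}}$, since negation preserves \eqref{Equation: SR} and carries $\mathcal{O}_{\alpha^*}$ to $\mathcal{O}_\alpha$. As $\xi$ and $\zeta$ lie in the same coadjoint orbit, choose $g\in G$ with $\mathrm{Ad}^*_g(\xi)=\zeta$. Then $\phi(g,\xi)=(\zeta,\eta)\in\g^*_{\text{s-reg}}\times\g^*_{\text{s-reg}}$, so $(g,\xi)\in(T^*G)_{\text{s-reg}}$, and $\varphi(g,\xi)=(\lambda(\zeta),\lambda(\eta))$ recovers the prescribed tuple.

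The main---arguably only---obstacle is careful bookkeeping of the negation $\xi\mapsto -\xi$: confirming that it carries $\mathcal{O}_{\alpha^*}$ to $\mathcal{O}_\alpha$, preserves $\g^*_{\text{s-reg}}$, and reverses the order of the top-level eigenvalues in precisely the way demanded by condition (ii) linking the two halves of $\varphi(\beta)$. All remaining inputs are direct consequences of the cited Guillemin--Sternberg results on the image of $\lambda$ along a coadjoint orbit, and of the elementary observation that the top-level part of $\lambda\circ\phi_2$ is the negation-reversal of the top-level part of $\lambda\circ\phi_1$.
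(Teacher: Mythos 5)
Your proposal is correct and takes essentially the same route as the paper: the forward inclusion is read off from the form $\phi(g,\xi)=(\mathrm{Ad}_g^*(\xi),-\xi)$ together with the description \eqref{Equation: SR} of $\g^*_{\text{s-reg}}$, and the reverse inclusion is obtained by choosing elements of $\mathcal{O}_{\alpha^*}\cap\g^*_{\text{s-reg}}$ and $\mathcal{O}_{\alpha}\cap\g^*_{\text{s-reg}}$ with the prescribed Gelfand--Cetlin values and conjugating one onto the negative of the other. The differences (variable labels, and attributing the interior characterization to Guillemin--Sternberg rather than to \eqref{Equation: SR} directly) are cosmetic.
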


\begin{proof}
Suppose that $(g,\xi)\in (G\times\g^*)_{\text{s-reg}}=(T^*G)_{\text{s-reg}}$, and consider the element $\varphi(g,\xi)=(\alpha_1,\ldots,\alpha_{2\text{b}})$. By virtue of \eqref{Equation: Perp}, (ii) is satisfied. The description \eqref{Equation: SR} then implies that (i), (iii), and (iv) hold. It therefore remains only to prove that any element of $\mathbb{R}^{2\text{b}}$ satisfying (i)-(iv) is in $\varphi((T^*G)_{\text{s-reg}})$. 

Let $(\alpha_1,\ldots,\alpha_{2\text{b}})\in\mathbb{R}^{2\text{b}}$ be such that (i)-(iv) hold. Since $$\alpha^*=(-\alpha_{\ell},\ldots,-\alpha_1)=(\alpha_{\text{b}+1},\cdots,\alpha_{\text{b}+\ell})$$ is a strictly decreasing sequence and $(\alpha_{\text{b}+\ell+1},\ldots,\alpha_{\text{2b}})\in\mathrm{interior}(\mathrm{GC}_{\alpha^*})$, there exists $\xi\in\mathcal{O}_{\alpha^*}\cap\g^*_{\text{s-reg}}$ satisfying $\lambda(\xi)=(\alpha_{\text{b}+1},\ldots,\alpha_{2\text{b}})$. It follows that $$-\xi\in(-\mathcal{O}_{\alpha^*})\cap\g^*_{\text{s-reg}}=\mathcal{O}_{\alpha}\cap\g_{\text{s-reg}}^*.$$ On the other hand, (i) and (iii) imply the existence of $\eta\in\mathcal{O}_{\alpha}\cap\g^*_{\text{s-reg}}$ satisfying $\lambda(\eta)=(\alpha_1,\ldots,\alpha_{\text{b}})$. We may therefore find $g\in G$ such that $\mathrm{Ad}_g(-\xi)=\eta$. The point $(g,-\xi)\in G\times\g^*=T^*G$ then belongs to $(T^*G)_{\text{s-reg}}$ and satisfies $\varphi(g,-\xi)=(\alpha_1,\ldots,\alpha_{2\text{b}})$. This completes the proof.
\end{proof}

An analogous and slightly simpler argument yields the image of the double Gelfand--Cetlin system.

\begin{proposition}\label{Proposition: Full image}
The image $\varphi(T^*G)\subset\mathbb{R}^{2\emph{b}}$ consists of all $(\alpha_1,\ldots,\alpha_{2\emph{b}})\in\mathbb{R}^{2\emph{b}}$ that satisfy the following conditions:
\begin{itemize}
\item[\textup{(i)}] $\alpha_1\geq\cdots\geq\alpha_{\ell}$;
\item[\textup{(ii)}] $\alpha_{\emph{b}+1+j}=-\alpha_{\ell-j}$ for all $j\in\{0,\ldots,\ell-1\}$;
\item[\textup{(iii)}] $(\alpha_{\ell+1},\ldots,\alpha_{\emph{b}})\in\mathrm{GC}_{\alpha}$, where $\alpha\coloneqq(\alpha_1,\ldots,\alpha_{\ell})$;
\item[\textup{(iv)}] $(\alpha_{\emph{b}+\ell+1},\ldots,\alpha_{\emph{2b}})\in\mathrm{GC}_{\alpha^*}$, where $\alpha^*\coloneqq(-\alpha_{\ell},\ldots,-\alpha_1)$.
\end{itemize}
\end{proposition}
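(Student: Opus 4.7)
\bigskip

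\noindent\textbf{Proof plan.} The strategy is a direct and slightly streamlined adaptation of the proof of Proposition \ref{Proposition: Open dense image}: we verify that conditions (i)--(iv) are necessary by unwinding the definitions of $\varphi$, $\phi$, and $\lambda$, and we verify that they are sufficient by explicitly constructing a point $(g, -\xi) \in T^*G$ that realizes the given tuple. The simplification over the s-regular case is that we no longer need strict orderings or open interior conditions, so we may invoke the defining identity $\lambda(\mathcal{O}_{\alpha}) = \{\alpha\} \times \mathrm{GC}_{\alpha}$ in its full (closed) form.

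\smallskip

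\noindent\emph{Step 1 (Necessity).} Fix $(g, \xi) \in G \times \g^* = T^*G$ and set $(\alpha_1, \ldots, \alpha_{2\text{b}}) \coloneqq \varphi(g,\xi)$. Condition (i) is immediate from the fact that $\alpha_1, \ldots, \alpha_{\ell}$ are the eigenvalues of the Hermitian matrix $\phi_1(g,\xi) = \mathrm{Ad}_g^*(\xi)$ listed in non-increasing order. Condition (ii) is obtained exactly as in the displayed equation \eqref{Equation: Perp} from the proof of Proposition \ref{Proposition: Open dense image}, using $\lambda_k(\mathrm{Ad}_g^*\xi) = \lambda_k(\xi)$ and $\lambda_k(\xi) = -\lambda_{\ell+1-k}(-\xi)$. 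Conditions (iii) and (iv) then follow from the definition of the Gelfand--Cetlin polytope: since $\phi_1(g,\xi) \in \mathcal{O}_{\alpha}$ and $\phi_2(g,\xi) = -\xi \in \mathcal{O}_{\alpha^*}$ (the latter by (ii)), applying $\lambda$ and dropping the first $\ell$ coordinates puts the remaining coordinates in $\mathrm{GC}_{\alpha}$ and $\mathrm{GC}_{\alpha^*}$, respectively.

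\smallskip

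\noindent\emph{Step 2 (Sufficiency).} Suppose $(\alpha_1, \ldots, \alpha_{2\text{b}}) \in \mathbb{R}^{2\text{b}}$ satisfies (i)--(iv). By (iv) and the equality $\lambda(\mathcal{O}_{\alpha^*}) = \{\alpha^*\} \times \mathrm{GC}_{\alpha^*}$, there exists $\xi \in \mathcal{O}_{\alpha^*}$ with $\lambda(\xi) = (\alpha_{\text{b}+1}, \ldots, \alpha_{2\text{b}})$. By (ii) we have $-\mathcal{O}_{\alpha^*} = \mathcal{O}_{\alpha}$, so $-\xi \in \mathcal{O}_{\alpha}$. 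By (i) and (iii) together with $\lambda(\mathcal{O}_{\alpha}) = \{\alpha\} \times \mathrm{GC}_{\alpha}$, there exists $\eta \in \mathcal{O}_{\alpha}$ with $\lambda(\eta) = (\alpha_1, \ldots, \alpha_{\text{b}})$. Choose $g \in G$ with $\mathrm{Ad}_g^*(-\xi) = \eta$. Then the point $(g, -\xi) \in G \times \g^* = T^*G$ satisfies $\varphi(g, -\xi) = (\lambda(\eta), \lambda(\xi)) = (\alpha_1, \ldots, \alpha_{2\text{b}})$, as required.

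\smallskip

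\noindent\emph{Expected obstacle.} There is essentially no substantive obstacle; the argument is strictly simpler than the one for Proposition \ref{Proposition: Open dense image} because we do not need the s-regularity conditions \eqref{Equation: SR} and can dispense with the verification that the constructed point lies in $(T^*G)_{\text{s-reg}}$. The only mild subtlety is to make sure one uses the full (closed) description $\lambda(\mathcal{O}_{\alpha}) = \{\alpha\} \times \mathrm{GC}_{\alpha}$ from the introduction, rather than its restriction to the s-regular locus, to produce the required $\xi$ and $\eta$ from (iii) and (iv).
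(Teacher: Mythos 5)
Your proof is correct and is precisely the ``analogous and slightly simpler argument'' that the paper invokes by reference to Proposition \ref{Proposition: Open dense image}: necessity via \eqref{Equation: Perp} and the closed identity $\lambda(\mathcal{O}_{\alpha})=\{\alpha\}\times\mathrm{GC}_{\alpha}$, and sufficiency by constructing $(g,-\xi)$ from suitable $\xi\in\mathcal{O}_{\alpha^*}$ and $\eta\in\mathcal{O}_{\alpha}$. No substantive differences from the paper's approach.
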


\section{A geometric quantization of $T^*\hspace{-2pt}\operatorname{U}(n)$}\label{Section: Quantization}

\subsection{The Bohr--Sommerfeld set}\label{Subsection: BS} Consider the trivial Hermitian line bundle $\mathcal{L}\coloneqq T^*G\times\mathbb{C}\longrightarrow T^*G$ and section $$s:T^*G\longrightarrow\mathcal{L},\quad\beta\mapsto(\beta,1).$$ Let $\nabla$ be the connection on $\mathcal{L}$ satisfying $$\nabla_X(fs)=(X(f)-i\theta(X)f)s$$ for all smooth complex vector fields $X$ on $T^*G$ and smooth functions $f:T^*G\longrightarrow\mathbb{C}$, where $\theta$ is regarded as a complex one-form in the obvious way. As $\omega$ is the curvature of $\nabla$, $(\mathcal{L},\nabla)$ is a prequantum line bundle on $T^*G$.

Proposition \ref{Proposition: Trivial} implies that the restricted double Gelfand--Cetlin system $\varphi\big\vert_{(T^*G)_{\text{s-reg}}}:(T^*G)_{\text{s-reg}}\longrightarrow\mathcal{B}$ constitute a real polarization  of $(T^*G)_{\text{s-reg}}$. In this context, we have the following result.

\begin{theorem}\label{Theorem: BS}
A point $\beta\in (T^*G)_{\emph{s-reg}}$ satisfies $\varphi(\beta)\in\mathbb{Z}^{2\emph{b}}$ if and only if $\varphi(\beta)$ is a Bohr--Sommerfeld point of the restricted Gelfand--Cetlin system.
\end{theorem}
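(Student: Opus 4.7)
The plan is to identify the non-empty fibers of $\varphi\big\vert_{(T^*G)_{\text{s-reg}}}$ as free $\mathcal{T}$-orbits (Proposition \ref{Proposition: Trivial}(iv)), parametrize them using the concrete complementary subtorus
$$\mathbb{T}_{\text{comp}} = \mathbb{T}_{01}\times(\mathbb{T}_{11}\times\cdots\times\mathbb{T}_{m1})\times(\mathbb{T}_{12}\times\cdots\times\mathbb{T}_{m2})$$
from the proof of Proposition \ref{Proposition: Trivial}, and directly compute the holonomy of $(\mathcal{L},\nabla)$ around each generating circle via the moment map identity \eqref{Equation: Moment map condition}. The standard $\mathbb{Z}$-basis of the cocharacter lattice $\Lambda^\vee(\mathbb{T}_{\text{comp}})$ comprises the standard basis vectors of each factor $\mathbb{R}^{\ell_j}=\mathrm{Lie}(\mathbb{T}_{ji})$, and each such vector generates a loop in the $\mathcal{T}$-orbit of $\beta$.

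The core computation is the line integral of $\theta$ around such a loop, say the one $\gamma^{(1)}_{jk}$ generated by the $k$-th basis vector of $\mathrm{Lie}(\mathbb{T}_{j1})$. Set $\xi\coloneqq\phi_1(\beta)$. Combining Propositions \ref{Proposition: Summary}(ii) and \ref{Proposition: Integration} (via \eqref{Equation: Action 2}) gives $\gamma^{(1)}_{jk}(t)=(\exp(tx),e)\cdot\beta$ for $t\in[0,2\pi]$, where $x\coloneqq d_{\sigma_j(\xi)}\nu_{jk}\in\g_j\subset\g$. Because $\phi_1$ is $(G\times G)$-equivariant and $\mathbb{T}_{j1}$ acts through the first-factor subgroup $(G_j)_{\sigma_j(\xi)}\subset G_\xi$, $\phi_1$ is constant along $\gamma^{(1)}_{jk}$, so the tangent vector at every point of the loop coincides with the restriction of $\underline{(x,0)}$. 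The moment map identity \eqref{Equation: Moment map condition} then yields
$$\theta(\underline{(x,0)})\big\vert_{\gamma^{(1)}_{jk}(t)} = \phi^{(x,0)}(\beta) = \xi(x),$$
and the homogeneity identity $\xi(d_\xi\nu_{jk})=\nu_{jk}(\xi)$ from the proof of Proposition \ref{Proposition: Integral} reduces this to $\nu_{jk}(\sigma_j(\xi))=\lambda_{jk}(\phi_1(\beta))$. Hence $\int_{\gamma^{(1)}_{jk}}\theta=2\pi\lambda_{jk}(\phi_1(\beta))$, and an analogous computation yields $\int_{\gamma^{(2)}_{jk}}\theta=2\pi\lambda_{jk}(\phi_2(\beta))$ for the $\mathbb{T}_{j2}$-generated loops with $j\geq 1$.

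The Bohr--Sommerfeld condition at $\varphi(\beta)$ is triviality of the holonomy $\exp(i\int_\gamma\theta)$ around each generating circle; by the previous paragraph this amounts to $\lambda_{jk}(\phi_1(\beta))\in\mathbb{Z}$ for all $(j,k)$ together with $\lambda_{jk}(\phi_2(\beta))\in\mathbb{Z}$ for all $j\geq 1$. But Proposition \ref{Proposition: Trivial}(i), or equivalently condition (ii) of Proposition \ref{Proposition: Full image}, forces $\lambda_{0k}(\phi_2(\beta))=-\lambda_{\ell+1-k}(\phi_1(\beta))$, so the missing $j=0$ conditions on $\phi_2$ follow automatically from the $\phi_1$-conditions. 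This shows Bohr--Sommerfeld is equivalent to $\varphi(\beta)\in\mathbb{Z}^{2\text{b}}$. The main obstacle is the bookkeeping in the second step: one must verify that $\Lambda^\vee(\mathbb{T}_{\text{comp}})$ is indeed the correct lattice for holonomy under the identification $\mathcal{T}\cong\mathbb{T}_{\text{comp}}$, and correctly track how its $2\text{b}-\ell$ generators encode the $2\text{b}$ integrality conditions modulo the $\ell$ linear relations from Proposition \ref{Proposition: Trivial}(i).
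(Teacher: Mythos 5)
Your argument is correct in substance and takes a genuinely different route from the paper's. The paper proves only the implication ``$\varphi_{\mathrm{comp}}(\beta)$ integral $\Rightarrow$ Bohr--Sommerfeld'' directly --- via Proposition \ref{Proposition: Integral}, the weight-lattice condition on $\xi\vert_{(\g_j)_{\sigma_j(\xi)}}$, and the cited integrality of isotropic group orbits, which produces the circle-valued functions $f_{jk}$ --- and then obtains the converse by exhibiting a single integral Bohr--Sommerfeld point and appealing to the lattice structure of the Bohr--Sommerfeld set via \cite[Theorem 2.4]{GuilleminSternbergGC}. Your direct holonomy computation around the generating circles of the fiber $\mathcal{T}\cdot\beta\cong\mathbb{T}_{\mathrm{comp}}$ yields both directions at once and makes no appeal to that external structure theorem; it is essentially the ``unwound'' version of the paper's citation, resting on the same two identities ($\theta(\underline{x})=\phi^x$ from \eqref{Equation: Moment map condition}, and $\xi(\mathrm{d}_\xi\nu_{jk})=\nu_{jk}(\xi)$). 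The bookkeeping you flag at the end is handled correctly: the $2\mathrm{b}-\ell$ generators of $\Lambda^\vee(\mathbb{T}_{\mathrm{comp}})$ give exactly the integrality of $\varphi_{\mathrm{comp}}(\beta)$, and the remaining $\ell$ coordinates of $\varphi(\beta)$ are then integral automatically by \eqref{Equation: Perp}.

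One justification in your core computation is wrong, although its conclusion survives. You assert $(G_j)_{\sigma_j(\xi)}\subset G_\xi$ and deduce that $\phi_1$ is constant along $\gamma^{(1)}_{jk}$. Neither claim holds: $(G_j)_{\sigma_j(\xi)}$ stabilizes only the restriction $\sigma_j(\xi)$, not $\xi$ itself (already for $n=2$ and $j=1$, a strongly regular non-diagonal $\xi$ is moved by conjugation by $G_1=(G_1)_{\sigma_1(\xi)}$), so $\phi_1(\gamma^{(1)}_{jk}(t))=\mathrm{Ad}^*_{\exp(tx)}(\xi)$ genuinely varies along the loop. What is constant is the scalar you actually integrate: $\phi^{(x,0)}(\gamma^{(1)}_{jk}(t))=(\mathrm{Ad}^*_{\exp(tx)}\xi)(x)=\xi(\mathrm{Ad}_{\exp(-tx)}x)=\xi(x)$, i.e.\ a Hamiltonian is conserved along its own flow. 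With that repair --- and noting that the minus sign in the definition of $\underline{x}$ only changes the sign of the action integral, which is immaterial for integrality --- the evaluation $\int_{\gamma^{(1)}_{jk}}\theta=\pm 2\pi\lambda_{jk}(\phi_1(\beta))$ and the rest of your argument go through.
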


\begin{proof}
Observe that the Lie algebra of $\mathbb{T}_{\text{comp}}$ is 
$$\left\{((x_1,\ldots,x_{\text{b}}),(\underbrace{0,\ldots,0}_{\ell\text{ times}},y_1,\ldots,y_{\text{b}-\ell})):(x_1,\ldots,x_{\text{b}})\in\mathbb{R}^{\text{b}},\text{ }(y_1,\ldots,y_{\text{b}-\ell})\in\mathbb{R}^{\text{b}-\ell}\right\}\cong\mathbb{R}^{\text{b}}\times\mathbb{R}^{\text{b}-\ell},$$ where $\mathbb{T}_{\text{comp}}$ is defined in the proof of Proposition \ref{Proposition: Trivial}. Let us also note that $$\varphi=(\lambda_1\circ\phi_1,\ldots,\lambda_{\text{b}}\circ\phi_1,\lambda_1\circ\phi_2,\ldots,\lambda_{\text{b}}\circ\phi_2):T^*G\longrightarrow\mathbb{R}^{\text{b}}\times\mathbb{R}^{\text{b}}$$ is the decomposition of $\varphi$ into component functions. It follows that 
\begin{equation}\label{Equation: Comp}\varphi_{\text{comp}}\coloneqq(\lambda_1\circ\phi_1,\ldots,\lambda_{\text{b}}\circ\phi_1,\lambda_{\ell+1}\circ\phi_2,\ldots,\lambda_{\text{b}}\circ\phi_2):T^*G\longrightarrow\mathbb{R}^{\text{b}}\times\mathbb{R}^{\text{b}-\ell}\end{equation} is a moment map for the Hamiltonian action of $\mathbb{T}_{\text{comp}}\subset\mathbb{T}\times\mathbb{T}$. This proof also implies that $$\lambda_k(\phi_1(\beta))=-\lambda_{\ell+1-k}(\phi_2(\beta))$$ for all $\beta\in T^*G$ and $k\in\{1,\ldots,\ell\}$. We conclude that $\varphi(\beta)\in\mathbb{Z}^{2\text{b}}$ if and only if $\varphi_{\mathrm{comp}}(\beta)\in\mathbb{Z}^{\text{b}}\times\mathbb{Z}^{\text{b}-\ell}$. It will therefore suffice to prove that $\beta\in (T^*G)_{\text{s-reg}}$ satisfies $\varphi_{\text{comp}}(\beta)\in\mathbb{Z}^{\text{b}}\times\mathbb{Z}^{\text{b}-\ell}$ if and only if $\varphi(\beta)$ is a Bohr--Sommerfeld point. In light of \cite[Theorem 2.4]{GuilleminSternbergGC}, we are further reduced to establishing the existence of $\beta\in (T^*G)_{\text{s-reg}}$ for which $\varphi_{\mathrm{comp}}(\beta)\in\mathbb{Z}^{\text{b}}\times\mathbb{Z}^{\text{b}-\ell}$ and $\varphi(\beta)$ is a Bohr--Sommerfeld point.

Suppose that $\beta\in (T^*G)_{\text{s-reg}}$ satisfies $\varphi_{\mathrm{comp}}(\beta)\in\mathbb{Z}^{\text{b}}\times\mathbb{Z}^{\text{b}-\ell}$, and recall the decomposition
\begin{equation}\label{Equation: Decomposition}\mathbb{T}_{\text{comp}}=\mathbb{T}_{01}\times\cdots\times\mathbb{T}_{m1}\times\mathbb{T}_{12}\times\cdots\times\mathbb{T}_{m2}\end{equation} of $\mathbb{T}_{\text{comp}}$ into components. Consider a specific component, i.e. $\mathbb{T}_{jk}$ for $j\in\{0,\ldots,m\}$ and $k\in\{1,2\}$ with $(j,k)\neq (0,2)$. The orbit $\Lambda_{jk}\coloneqq\mathbb{T}_{jk}\cdot\beta\subset(T^*G)_{\text{s-reg}}$ is an isotropic submanifold that we now examine in detail.

Let $\phi(\beta)=(\xi,\eta)\in\g^*_{\text{s-reg}}\times\g^*_{\text{s-reg}}$, and consider the Hamiltonian actions of the subgroups $$(G_j)_{\sigma_j(\xi)}\cong(G_j)_{\sigma_j(\xi)}\times\{e\}\subset G\times G\quad\text{and}\quad (G_j)_{\sigma_j(\eta)}\cong\{e\}\times (G_j)_{\sigma_j(\eta)}\subset G\times G$$ on $T^*G$. Equation \eqref{Equation: Action 2} implies that
$$\Lambda_{jk}=(G_j)_{\sigma_j(\xi)}\cdot\beta\hspace{5pt}\text{if }k=1\quad\text{and}\quad\Lambda_{jk}=(G_j)_{\sigma_j(\eta)}\cdot\beta\hspace{5pt}\text{if }k=2.$$ One further observation is that the actions of $(G_j)_{\sigma_j(\xi)}$ and $(G_j)_{\sigma_j(\eta)}$ admit respective moment maps of
$$\mu_1:(T^*G)_{\text{s-reg}}\longrightarrow(\g_j)_{\sigma_j(\xi)}^*,\quad\beta\mapsto\phi_1(\beta)\bigg\vert_{(\g_j)_{\sigma_j(\xi)}}$$ and  $$\mu_2:(T^*G)_{\text{s-reg}}\longrightarrow(\g_j)_{\sigma_j(\eta)}^*,\quad\beta\mapsto\phi_2(\beta)\bigg\vert_{(\g_j)_{\sigma_j(\eta)}}.$$ It follows that $$\mu_1(\Lambda_{jk})=\left\{\xi\big\vert_{(\g_j)_{\sigma_j(\xi)}}\right\}\hspace{5pt}\text{if }k=1\quad\text{and}\quad \mu_2(\Lambda_{jk})=\left\{\eta\big\vert_{(\g_j)_{\sigma_j(\eta)}}\right\}\hspace{5pt}\text{if }k=2.$$ On the other hand, Proposition \ref{Proposition: Integral} and Equation \eqref{Equation: Comp} combine with the fact that $\varphi_{\text{comp}}(\beta)\in\mathbb{Z}^{\text{b}}\times\mathbb{Z}^{\text{b}-\ell}$ to imply that $\xi\big\vert_{(\g_j)_{\sigma_j(\xi)}}$ and $\eta\big\vert_{(\g_j)_{\sigma_j(\eta)}}$ belong to the weight lattices of $(G_j)_{\sigma_j(\xi)}$ and $(G_j)_{\sigma_j(\eta)}$, respectively. We also recognize that the actions of $(G_j)_{\sigma_j(\xi)}$ and $(G_j)_{\sigma_j(\eta)}$ preserve the tautological one-form on $T^*G$, and so lift to connection-preserving actions on the prequantum line bundle. It is also clear from \eqref{Equation: Moment map condition} that $$\mu_1^x=\theta(\underline{x})\quad\text{and}\quad\mu_2^y=\theta(\underline{y})$$ for all $x\in(\g_j)_{\sigma_j(\xi)}$ and $y\in (\g_j)_{\sigma_j(\eta)}$. These last four sentences imply that $\Lambda_{jk}$ is integral with respect to the prequantum line bundle, as in \cite[Section 12.4]{guillemin-sternberg-semi-classical} and \cite[Section 2.5]{GuilleminUribeWang}. This amounts to the existence of a smooth map $f_{jk}:\Lambda_{jk}\longrightarrow\operatorname{U}(1)$ satisfying \begin{equation}\label{Equation: BS isotropic} \iota_{\Lambda_{jk}}^*\theta=\frac{1}{2\pi i}f_{jk}^{-1}\mathrm{d}f_{jk},\end{equation} where $\iota_{\Lambda_{jk}}:\Lambda_{jk}\longrightarrow T^*G$ is the inclusion map. 

Let us consider the Lagrangian submanifold $$\Lambda\coloneqq\mathbb{T}_{\text{comp}}\cdot\beta\subset (T^*G)_{\text{s-reg}}.$$ The decomposition \eqref{Equation: Decomposition} then gives rise to a diffeomorphism
$$\Lambda\cong\prod_{(j,k)\neq (0,2)}(\mathbb{T}_{jk}\cdot\beta)=\prod_{(j,k)\neq (0,2)}\Lambda_{jk},$$ and hence also to projections $\pi_{jk}:\Lambda\longrightarrow\Lambda_{jk}$. This allows us to consider the pullbacks $\pi_{jk}^*f_{jk}:\Lambda\longrightarrow\operatorname{U}(1)$ and their pointwise product $$f\coloneqq\left(\prod_{(j,k)\neq (0,2)}\pi_{jk}^*f_{jk}\right):\Lambda\longrightarrow\operatorname{U}(1).$$ The identity \eqref{Equation: BS isotropic} and a straightforward calculation then yield $$\iota_{\Lambda}^*\theta=\frac{1}{2\pi i}f^{-1}\mathrm{d}f,$$ where $\iota_{\Lambda}:\Lambda\longrightarrow T^*G$ is the inclusion map. In particular, $\varphi(\beta)$ is a Bohr--Sommerfeld point.

The preceding paragraphs establish that any $\beta\in (T^*G)_{\text{s-reg}}$ satisfying $\varphi_{\mathrm{comp}}(\beta)\in\mathbb{Z}^{\text{b}}\times\mathbb{Z}^{\text{b}-\ell}$ is such that $\varphi(\beta)$ is a Bohr--Sommerfeld point. In light of the first paragraph of this proof, it remains only to verify the existence of $\beta\in (T^*G)_{\text{s-reg}}$ with $\varphi_{\mathrm{comp}}(\beta)\in\mathbb{Z}^{\text{b}}\times\mathbb{Z}^{\text{b}-\ell}$. We begin by choosing an $\ell\times\ell=n\times n$ diagonal matrix $\xi$ with pairwise distinct integral eigenvalues. It follows that $\xi,-\xi\in\g^*_{\text{s-reg}}$, implying that $(e,\xi)\in (G\times\g^*)_{\text{s-reg}}=(T^*G)_{\text{s-reg}}$. On the other hand, it is clear that $\varphi_{\mathrm{comp}}(e,\xi)\in\mathbb{Z}^{\text{b}}\times\mathbb{Z}^{\text{b}-\ell}$.
\end{proof}

\begin{corollary}\label{Corollary: BS}
A point $(\alpha_1,\ldots,\alpha_{2\emph{b}})\in\mathbb{R}^{2\emph{b}}$ belongs to the Bohr--Sommerfeld set of the restricted double Gelfand--Cetlin system $\varphi:(T^*G)_{\emph{s-reg}}\longrightarrow\mathcal{B}\subset\mathbb{R}^{2\emph{b}}$ if and only if it satisfies the following properties:
\begin{itemize}
\item[\textup{(i)}] $(\alpha_1,\ldots,\alpha_{2\emph{b}})\in\mathbb{Z}^{2\emph{b}}$;
\item[\textup{(ii)}] $\alpha_1>\cdots>\alpha_{\ell}$;
\item[\textup{(iii)}] $\alpha_{\emph{b}+1+j}=-\alpha_{\ell-j}$ for all $j\in\{0,\ldots,\ell-1\}$;
\item[\textup{(iv)}] $(\alpha_{\ell+1},\ldots,\alpha_{\emph{b}})\in\mathrm{interior}(\mathrm{GC}_{\alpha})$, where $\alpha\coloneqq(\alpha_1,\ldots,\alpha_{\ell})$;
\item[\textup{(v)}] $(\alpha_{\emph{b}+\ell+1},\ldots,\alpha_{\emph{2b}})\in\mathrm{interior}(\mathrm{GC}_{\alpha^*})$, where $\alpha^*\coloneqq(-\alpha_{\ell},\ldots,-\alpha_1)$.
\end{itemize}
\end{corollary}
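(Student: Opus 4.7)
The plan is to deduce this corollary directly by combining Theorem \ref{Theorem: BS} with Proposition \ref{Proposition: Open dense image}. The Bohr--Sommerfeld set of the restricted double Gelfand--Cetlin system $\varphi\big\vert_{(T^*G)_{\text{s-reg}}}:(T^*G)_{\text{s-reg}}\longrightarrow\mathcal{B}$ is, by definition, a subset of $\varphi((T^*G)_{\text{s-reg}})$. Theorem \ref{Theorem: BS} characterizes, at the level of the total space, exactly which points $\beta\in (T^*G)_{\text{s-reg}}$ have Bohr--Sommerfeld image: they are the points with $\varphi(\beta)\in\mathbb{Z}^{2\text{b}}$. Consequently, the Bohr--Sommerfeld set is precisely $\varphi((T^*G)_{\text{s-reg}})\cap\mathbb{Z}^{2\text{b}}$.

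With this reformulation in hand, I would invoke Proposition \ref{Proposition: Open dense image}, which gives an explicit description of $\varphi((T^*G)_{\text{s-reg}})$ in terms of strict decreasing inequalities among the first $\ell$ coordinates, the antisymmetry relation $\alpha_{\text{b}+1+j}=-\alpha_{\ell-j}$, and containment of the remaining blocks in the interiors of the Gelfand--Cetlin polytopes $\mathrm{GC}_{\alpha}$ and $\mathrm{GC}_{\alpha^*}$. Intersecting this description with $\mathbb{Z}^{2\text{b}}$ yields conditions (i)--(v) of the Corollary: adding (i) accounts for the integrality requirement, while (ii)--(v) reproduce conditions (i)--(iv) of Proposition \ref{Proposition: Open dense image} verbatim. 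The forward and backward implications are then immediate from this identification.

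There is essentially no new mathematical obstacle; all of the work has been done in Theorem \ref{Theorem: BS} (which handles the hard analytic content of integrating the holonomy condition to a global function $f:\Lambda\longrightarrow\operatorname{U}(1)$) and in Proposition \ref{Proposition: Open dense image} (which handles the combinatorics of the moment image). If there is any subtlety to watch for, it is simply confirming that no point of $\mathcal{B}\cap\mathbb{Z}^{2\text{b}}$ outside $\varphi((T^*G)_{\text{s-reg}})$ can be declared Bohr--Sommerfeld, but this is automatic from the convention that Bohr--Sommerfeld points must lie over non-empty fibers of the polarization. Thus the proof reduces to citing the two prior results and matching the enumerated conditions one by one.
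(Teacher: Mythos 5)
Your proposal is correct and matches the paper's own proof, which deduces the corollary immediately from Theorem \ref{Theorem: BS} (identifying the Bohr--Sommerfeld set with $\varphi((T^*G)_{\text{s-reg}})\cap\mathbb{Z}^{2\text{b}}$) together with the explicit description of $\varphi((T^*G)_{\text{s-reg}})$ in Proposition \ref{Proposition: Open dense image}. Your matching of conditions (i)--(v) with integrality plus conditions (i)--(iv) of that proposition is exactly the intended argument.
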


\begin{proof}
This follows immediately from Proposition \ref{Proposition: Open dense image} and Theorem \ref{Theorem: BS}.
\end{proof}

\subsection{The Peter--Weyl theorem and invariance of polarization}\label{Subsection: Peter--Weyl}
As explained in Section \ref{Subsection: The quantization}, we will quantize $T^*G$ by replacing the Bohr--Sommerfeld set $\varphi((T^*G)_{\text{s-reg}})\cap\mathbb{Z}^{2\text{b}}$ with $\varphi(T^*G)\cap\mathbb{Z}^{2\text{b}}$. Proposition \ref{Proposition: Full image} tells us that this new Bohr--Sommerfeld set is the locus of $(\alpha_1,\ldots,\alpha_{2\text{b}})\in\mathbb{R}^{2\text{b}}$ satisfying conditions (i) and (iii) in Corollary \ref{Corollary: BS}, as well as $\alpha_1\geq\cdots\geq\alpha_{\ell}$, $(\alpha_{\ell+1},\ldots,\alpha_{\text{b}})\in\mathrm{GC}_{\alpha}$, and $(\alpha_{\text{b}+\ell+1},\ldots,\alpha_{\text{2b}})\in\mathrm{GC}_{\alpha^*}$. A Bohr--Sommerfeld point is thereby the data of a dominant integral weight $\alpha$ of $G$, an integral element of $\mathrm{GC}_{\alpha}$, and an integral element of $\mathrm{GC}_{\alpha^*}$. At the same time, let us write $V_{\alpha}$ and $V_{\alpha^*}$ for the irreducible complex $G$-modules of highest weights $\alpha$ and $\alpha^*$, respectively. An application of \cite[Theorem 6.1]{GuilleminSternbergGC} reveals that the number of integral points in $\mathrm{GC}_{\alpha}$ (resp. $\mathrm{GC}_{\alpha^*}$) is precisely the dimension of $V_{\alpha}$ (resp. $V_{\alpha^*}$). We also note that $V_{\alpha^*}=V_{\alpha}^*$. The double Gelfand--Cetlin system $\varphi:T^*G\longrightarrow\mathcal{B}$ thereby gives rise to the quantization $$Q(T^*G)=\bigoplus_{\alpha}V_{\alpha}\otimes V_{\alpha}^*,$$ where $\alpha$ runs over the dominant integral weights of $G$. An application of the Peter--Weyl theorem allows us to identify this quantization with $L^2(G)$, the geometric quantization of $T^*G$ arising from the bundle projection $T^*G\longrightarrow G$. Therefore, the Peter--Weyl theorem may be interpreted in terms of geometric quantization as giving an instance of invariance of polarization. 

\bibliographystyle{acm} 
\bibliography{Quantization}

\end{document}